\newtheorem{thm}{Theorem}[section]
\newtheorem{lem}[thm]{Lemma}
\newtheorem{prop}[thm]{Proposition}
\newtheorem{cor}[thm]{Corollary}
\theoremstyle{definition}
\newtheorem{defn}[thm]{Definition} %[section]
\newtheorem{lemma}{Lemma}[section]
\theoremstyle{remark}
\def\C{{\mathbb C}}
\def\Z{{\mathbb Z}}
\def\Zn{{\mathbb Z}^n}
\def\Rn{{\mathbb R}^n}
\def\R2n{{\mathbb R}^{2n}}
\def\S{{\mathcal S}}
\def\Rn{{\mathbb R}^n}
\def\C{{\mathbb C}}
\def\Z{{\mathbb Z}}
\def\R^2{{\mathbb R}^2}
\def\R2n{{\mathbb R}^{2n}}
\def\R{{\mathbb R}}
\def\C{{\mathbb C}}
\def\S{{\mathcal S}}
\def\N{{\mathbb N}}
\title[Ellipticity and Fredholmness of pseudo-differential operators on 
$\ell^2(\Zn)$]{Ellipticity and Fredholmness
of pseudo-differential operators on $\ell^2(\Zn)$ }
\author{Aparajita Dasgupta}
\address{Aparajita Dasgupta \endgraf Department of Mathematics \endgraf Indian Institute of Technology Delhi \endgraf New Delhi - 110 016, India.}
\email{adasgupta@maths.iitd.ac.in}
\author{Vishvesh Kumar} 
\address{Vishvesh Kumar  \endgraf Department of Mathematics: Analysis, Logic and Discrete Mathematics
	\endgraf Ghent University
	\endgraf Krijgslaan 281, Building S8,	B 9000 Ghent,
	Belgium .} 
\email{vishveshmishra@gmail.com}
\begin{document}
\begin{abstract} 
The minimal operator and the maximal operator of an elliptic 
pseudo-differential operator with symbols on $\Z^n\times \mathbb{T}^n$ are 
proved to coincide and the domain is 
given in terms of a Sobolev space. Ellipticity and Fredholmness are proved 
to be equivalent for pseudo-differential operators on $\Z^n$. The index of 
an elliptic pseudo-differential operator on $\Z^n$ is also computed.

 \end{abstract}

\keywords{pseudo-differential operators,  minimal and maximal operators, 
ellipticity, Fredholmness, index}
\subjclass[2010]{Primary 35S05, 47G30; Secondary 43A85, 43A77}

\maketitle

%\tableofcontents

\section{Introduction}
\setcounter{equation}{0}
The aim of this paper is to investigate the ellipticity and Fredholmness of 
pseudo-differential 
operators on $\ell^2({\Zn})$ in the context of minimal and maximal operators, 
the domains of elliptic pseudo-differential operators and Fredholm operators. 
The pseudo-differential operators on the lattice $\mathbb{Z}^n$, as in this 
paper, are suitable for solving difference equations 
on $\mathbb{Z}^n.$ Such equations naturally appear in various problems 
of modelling and in the discretisation of continuous problems.  Several 
attempts of developing a suitable theory of pseudo-differential operators 
on the lattice $\mathbb{Z}^n $ have been done in the literature, see e.g. 
\cite{DW13, GJBNM16, SW, Rab10, RR09} but with no symbolic calculus.  
Recently,  a global symbolic calculus has been developed in \cite{Rulat}. 
The symbol 
classes therein exhibit improvements when differences are taken with 
respect to 
the space (lattice) variable, thus resembling the behaviour of the 
so-called SG 
pseudo-differential operators on $\mathbb{R}^n$, first developed by Cordes 
\cite{Cor95}, but with a twist in variables. Ellipticity is a condition on the operators at infinity where the lower-order terms cannot be neglected. On the other hand, Fredholmness measures the almost invertibility of the operators and depends very much on the space on which
the operators are defined. For a good theory of pseudo-differential operators on a given function space, a relation between of ellipticity and Fredholmness property is
crucial.  In \cite{AW2}, the ellipticity of the Fredholm pseudo-differential
operator with arbitrary SG-symbols  has been proved.
For some recent developments 
on the theory of pseudo-differential operators on $\Zn$ and its 
applications, we refer 
to \cite{CK, Duvan}. In  \cite{ADWo, Wongbook}, the equivalence of ellipticity and Fredholmness and the applications of SG 
pseudo-differential operators with positive order on $L^{p}(\mathbb{R}^n)$, 
$1<p<\infty,$ has been investigated.  The maximal and minimal extension for another class of elliptic pseudo-differential operators on $L^{p}(\mathbb{R}^n),$  
$1<p<\infty$, can be found in \cite{Wo}. 
In Section \ref{Pre}, we recall symbol classes and the corresponding calculus 
of the pseudo-difference operators on $\mathbb{Z}^n.$ An interesting 
difference with the usual theory of pseudo-differential operators is that 
since the space $\Zn$ is discrete, the Schwartz kernels of the corresponding pseudo-differencial  operators do not have singularities at the diagonal.  Also the phase space is $\Zn\times \mathbb{T}^n$ with the frequencies being elements of the compact space $\mathbb{T}^n$ (the torus $\mathbb{T}^n:=\Rn
/\Zn)$. The usual theory of pseudo-differential operators works with symbol classes with increasing decay of symbols after taking their derivatives in the frequency variable. The discrete calculus is essentially different since one can not construct it using standard methods relying on the decay properties in the frequency component of the phase space since the frequency space, $\mathbb{T}^n$ is compact, so no improvement with respect to the decay of the frequency variable is possible.
Also it is not suitable to work with derivatives with respect to the space variable $k\in\Zn$. Therefore, it is replaced by working with appropriate difference operators on the lattice.  The global theory of discrete symbolic calculus has been developed in \cite{Rulat} similar to the theory developed in \cite{R2, RT, R7, R6,  R1, R3, R4} but with a twist, swapping the order of the space and frequency variables.  In Section \ref{Pre}, we developed the family of discrete Sobolev  spaces, which are an essential tools to study the maximal and minimal extensions of the discrete operators.  Furthermore, in Section \ref{maxmin}, we investigate the  minimal and maximal  operators of pseudo-differential operators on $\mathbb{Z}^n$ with symbols in $S^{m},$ $m>0$ and show that they are equal  for elliptic operators. The main ingredient  for this is an analogue of the Agmon--Douglis--Nirenberg inequalities for elliptic discrete 
pseudo-differential operators, which we also establish in Section 
\ref{maxmin}. In Section \ref{Fredell}, we first recall the Fredholm pseudo-differential operators and study Fredholmness property of discrete pseudo-differential operators and calculated the index of a Fredholm 
pseudo-differential operator on $\ell^2({\mathbb{Z}}^n)$.  

\section{Preliminaries}\label{Pre}
In this section, we recapitulate basic facts, notation and definitions of discrete Fourier analysis and pseudo-differential operator on $\mathbb{Z}^n.$ We begin with recalling the definition of Fourier transform on lattice $\mathbb{Z}^n.$

\setcounter{equation}{0}
The discrete Fourier transform $\hat{f}$ of a function $f$ in 
$\ell^1(\mathbb{Z}^n)$ is defined by 
$$\widehat{f}(x)= \sum_{k \in \mathbb{Z}^n} 
e^{-2 \pi i k \cdot x} \,f(k)\, $$ for all $x \in \mathbb{T}^n$, where
${\mathbb{T}}^n=\R^n/{\Z^n},$ $k \cdot x= \sum_{j=1}^n k_jx_j$ 
for $k=(k_1, k_2, \ldots, k_n) \in \Z^n$ and $x=(x_1, x_2, \ldots, x_n) 
\in \mathbb{T}^n.$ The discrete Fourier transform can be extended to 
$\ell^2(\mathbb{Z}^n)$ using the usual  density arguments. We normalize the 
Haar 
measures on $\Z^n$ and $\mathbb{T}^n$ in such  a way that the  
Plancherel formula to the effect that
$$ \sum_{k \in \Z^n} |f(k)|^2 = \int_{\mathbb{T}^n} |\widehat{f}(x)|^2\,dx.
$$ is valid. Next, the inverse discrete Fourier transform is given by 
$$f(k)= \int_{\mathbb{T}^n} e^{2 \pi i k \cdot x} \widehat{f}(x) \, 
dx,\quad k\in \Z^n.$$ 

We recall the discrete calculus  developed in \cite{RT}.
 Let $f$ be a function on $ \Z^n$ and $e_j \in \N^n$ be such that $e_j$ 
has $1$ in the $j^{th}$ entry and zeros elsewhere.  The l 
difference operator $\Delta_{k_j}$ is defined by 
 $$\Delta_{k_j} f(k)= f(k+e_j)- f(k)$$ and set 
$$\Delta_{k}^\alpha= \Delta_{k_1}^{\alpha_1} \Delta_{k_2}^{\alpha_2} 
\ldots \Delta_{k_n}^{\alpha_n}$$ for all 
$$\alpha=(\alpha_1,\alpha_2,\dots, 
\alpha_n) \in \N^n_0=\N^n\cup\{0\}$$ and all
$$k=(k_1,k_2,\dots,k_n)\in \Z^n.$$ 
We have the following formulae from \cite{Rulat}, that  are useful in the sequel. 
 \begin{itemize}
 	\item[(i)] Let $f: \Z^n \rightarrow \C.$ Then
 	$$(\Delta_{k}^\alpha \sigma)(k)= \sum_{\beta \leq \alpha} 
(-1)^{|\alpha-\beta|} \binom{\alpha}{\beta} f(k+\beta),\quad k\in \Z^n.$$
 	 \item[(ii)]  Let $f, g : \Z^n \rightarrow \C.$ Then 
 	$$(\Delta_{k}^\alpha(fg))(k)= \sum_{\beta \leq \alpha} 
\binom{\alpha}{\beta} \, (\Delta_{k}^\beta f)(k)\, 
(\bar{\Delta}_{k}^{\alpha-\beta} g)(k+\beta),\quad k\in \Z^n,$$ for all 
multi-indices $\alpha.$ This is a special case of the Leibniz formula.  
 	 \item [(iii)] Let $f, g : \Z^n \rightarrow \C.$ Then 
 	 $$ \sum_{k \in \Z^n} f(k)\, (\Delta_{k}^\alpha g)(k)= 
(-1)^{|\alpha|} \sum_{k \in \Z^n} (\bar{\Delta}_k^\alpha f)(k)\, 
g(k),\quad k\in \Z^n,$$ 
where $(\bar{\Delta}_{k_j}f)(k)= f(k)-f(k-e_j),$ with higher order 
differences iteratively defined. 
 \end{itemize} 
 We will be using the usual notations,
 $$D^{\alpha}_{x}=D^{\alpha_1}_{x_1}...D^{\alpha_n}_{x_n}, ~~~D_{x_j}=\frac{1}{2\pi i}\frac{\partial}{\partial x_j}$$ and 
 $$D^{(\alpha)}_{x}=D^{(\alpha_1)}_{x_1}...D^{(\alpha_n)}_{x_n}, ~~~D_{x_j^{l}}=\prod\limits_{m=0}^{l}\left(\frac{1}{2\pi i}\frac{
 \partial}{\partial x_j}-m\right),~~~l\in\N.$$
The operators $D^{(\alpha)}_{x}$ are useful in the analysis in torus and details can be found in \cite{RT}. The symbol classes  are then defined as follows: 
 \begin{defn} For $-\infty <m <\infty,$ we say that a function 
$\sigma: \Z^n \times \mathbb{T}^n \rightarrow \C$ belongs to 
$S^m(\Z^n \times \mathbb{T}^n)$ if $\sigma (k, \cdot) \in 
C^\infty(\mathbb{T}^n)$ for all $k \in \Z^n$ and for all multi-indices 
$\alpha, \beta$ there exists a positive constant $C_{\alpha, \beta}$ such that 
	$$|(D_x^{(\beta)} \Delta_{k}^\alpha \sigma)(k, x)| \leq C_{\alpha, 
\beta} (1+|k|)^{m-|\alpha|},\quad (k,x)\in \Z^n\times \mathbb{T}^n.$$
\end{defn}

The corresponding discrete pseudo-differential operator with symbol $\sigma$ is given by
$$(T_\sigma f)(k)= \int_{\mathbb{T}^n} e^{2 \pi i k \cdot x}\, 
\sigma(k, x)\widehat{f}(x)\, dx, \quad k \in \Z^n.$$

The following theorem gives  the product of two discrete 
pseudo-differential operators. 
\begin{thm} \label{compo} \cite[Theorem 3.1]{Rulat} Let $\sigma \in 
S^{m_1}(\Z^n \times \mathbb{T}^n)$ and $\tau \in S^{m_2}
(\Z^n \times \mathbb{T}^n).$ Then the product $T_\sigma  T_\tau$ of the 
pseudo-differential operators $T_\sigma$ and $T_\tau$ is a 
pseudo-differential operator with symbol in $S^{m_1+m_2}(\Z^n \times \mathbb{T}^n).$
\end{thm}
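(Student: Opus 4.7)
My plan is to compute the symbol of the composition explicitly by unwinding the two defining integrals and then verify that the resulting symbol satisfies the estimates of the class $S^{m_1+m_2}(\Z^n\times\mathbb{T}^n)$. The natural tool is Fourier inversion on $\Z^n$, which lets me turn the intermediate output $T_\tau f$ into a sum indexed by $\Z^n$, and then realign the frequency variable of $T_\sigma$ with that of $T_\tau$.

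Concretely, I first write
\[
(T_\sigma T_\tau f)(k)=\int_{\mathbb{T}^n}e^{2\pi i k\cdot x}\sigma(k,x)\,\widehat{T_\tau f}(x)\,dx
\]
and expand $\widehat{T_\tau f}(x)=\sum_{m\in\Z^n}e^{-2\pi i m\cdot x}(T_\tau f)(m)$. Inserting the defining formula for $(T_\tau f)(m)$, interchanging the summation and the two integrations (justified for Schwartz $f$), and comparing the result with $(T_\sigma T_\tau f)(k)=\int_{\mathbb{T}^n}e^{2\pi i k\cdot y}\rho(k,y)\widehat{f}(y)\,dy$, I read off, after the substitution $j=m-k$, the explicit formula
\[
\rho(k,y)=\sum_{j\in\Z^n}\widehat\sigma(k,j)\,e^{2\pi i j\cdot y}\,\tau(k+j,y),\qquad \widehat\sigma(k,j):=\int_{\mathbb{T}^n}e^{2\pi i j\cdot x}\sigma(k,x)\,dx,
\]
i.e.\ the Fourier coefficients of $\sigma(k,\cdot)$ are twisted against a translate of $\tau$. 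By iterated integration by parts in $x\in\mathbb{T}^n$ using the smoothness of $\sigma(k,\cdot)$, the coefficients $\widehat\sigma(k,j)$ decay faster than any polynomial in $|j|$, uniformly in $k$; this already gives the absolute convergence of the sum.

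Next, to identify the order of $\rho$, I would extract the formal asymptotic expansion
\[
\rho(k,y)\sim\sum_{\alpha\in\N_0^n}\frac{1}{\alpha!}\,(D_y^{(\alpha)}\sigma)(k,y)\,(\Delta_k^\alpha\tau)(k,y),
\]
by expanding $\tau(k+j,y)$ in $j$ via the discrete Taylor formula for lattice functions (whose coefficients are the differences $\Delta_k^\alpha\tau$) and by recognising, from the Fourier expansion of $\sigma(k,\cdot)$, that $\sum_{j}\widehat\sigma(k,j)e^{2\pi i j\cdot y}\binom{j}{\alpha}=\frac{1}{\alpha!}(D_y^{(\alpha)}\sigma)(k,y)$ in the sense used in \cite{RT}. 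Each term in the asymptotic expansion manifestly lies in $S^{m_1+m_2-|\alpha|}$ by the Leibniz rule for $\Delta_k^\alpha$ (item (ii) in the preliminaries) and the definition of the symbol classes, while the remainder after $N$ terms is controlled by the same argument applied to the Taylor remainder. Therefore it suffices to show $\rho\in S^{m_1+m_2}$ directly, and the asymptotic expansion then refines this conclusion modulo $S^{-\infty}$.

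The main obstacle is exactly the estimate $|(D_y^{(\beta)}\Delta_k^\alpha\rho)(k,y)|\le C_{\alpha,\beta}(1+|k|)^{m_1+m_2-|\alpha|}$. Applying $\Delta_k^\alpha$ termwise to the explicit series for $\rho$ produces, via the discrete Leibniz rule, sums of products of $\Delta_k^{\alpha'}\widehat\sigma(k,j)$ with $\Delta_k^{\alpha-\alpha'}\tau(k+j,y)$ evaluated at shifted arguments, while $D_y^{(\beta)}$ pulls down factors of $j$ and derivatives of $\tau$. The delicate point is that $\tau(k+j,y)$ carries the factor $(1+|k+j|)^{m_2}$, which is not comparable to $(1+|k|)^{m_2}$ when $|j|$ is as large as $|k|$. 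I would handle this by splitting the sum over $j$ into $|j|\le|k|/2$ and $|j|>|k|/2$: on the first region $(1+|k+j|)\sim(1+|k|)$ and the rapid decay of $\widehat\sigma(k,j)$ closes the estimate, while on the second region the extra decay $|\widehat\sigma(k,j)|\le C_M(1+|j|)^{-M}$ for arbitrarily large $M$ (from integration by parts in $x$) absorbs the polynomial growth $(1+|k+j|)^{m_2}$ and still yields a factor $(1+|k|)^{m_1+m_2-|\alpha|}$. Combining the two regions gives the required symbol estimate, and the theorem follows.
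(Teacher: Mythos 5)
The paper gives no proof of this theorem --- it is quoted directly from \cite[Theorem 3.1]{Rulat} --- and your argument is essentially the one in that reference: derive the composed symbol $\rho(k,y)=\sum_{j\in\Z^n}\widehat{\sigma}(k,j)\,e^{2\pi i j\cdot y}\,\tau(k+j,y)$ by Fourier inversion on $\Z^n$, exploit the rapid decay in $j$ of the Fourier coefficients of $\sigma(k,\cdot)$, and handle the shift $k\mapsto k+j$ in $\tau$ by splitting the $j$-sum (equivalently, a Peetre-type inequality). One phrasing slip is worth fixing: when $m_1>0$ the coefficients do \emph{not} decay rapidly in $j$ uniformly in $k$; the correct bound is $|\Delta_k^{\alpha'}\widehat{\sigma}(k,j)|\le C_{M,\alpha'}(1+|j|)^{-M}(1+|k|)^{m_1-|\alpha'|}$, and it is precisely this factor $(1+|k|)^{m_1-|\alpha'|}$ that must be carried through both regions of your two-region estimate to land on the stated order $m_1+m_2-|\alpha|$.
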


\begin{defn}
	A symbol $\sigma \in S^m(\Z^n \times \mathbb{T}^n)$ is called {\it 
elliptic} (of order $m$) if there exist positive constants $C$ and  $M>0$ 
such that 
	$$|\sigma(k, x)| \geq C(1+|k|)^m,$$ for all $k\in \Z^n$ and all 
$x\in \mathbb{T}^n$ 
with $|k|>M$.  
\end{defn}
The corresponding 
pseudo-differential operator $T_\sigma$ is called {\it elliptic}.\\
The following theorem gives the parametrix for an elliptic 
pseudo-differential operators. 
\begin{thm} \label{Ruz3.6} \cite[Theorem 3.6]{Rulat} Let $\sigma \in 
S^m(\Z^n \times \mathbb{T}^n),\, -\infty <m<\infty,$ be elliptic. Then 
there exists a symbol $\tau \in S^m(\Z^n \times \mathbb{T}^n)$ such that 
$$T_\sigma T_\tau= I+R$$ $$ T_\tau T_\sigma = I+S$$ where $R$ and $S$ are 
infinitely smoothing in the sense that they are pseudo-differential operators 
with symbol in $\bigcap_{\nu \in \R} S^\nu (\Z^n \times \mathbb{T}^n).$
	\end{thm}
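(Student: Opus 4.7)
The plan is to construct $\tau$ by the standard parametrix recipe: first invert $\sigma$ pointwise, then correct the resulting error iteratively using the composition formula of Theorem \ref{compo}, and finally combine all corrections by an asymptotic summation argument. Note that, although the statement reads $\tau\in S^{m}$, by ellipticity the parametrix must live in $S^{-m}(\Zn\times\mathbb{T}^n)$, so I will produce a $\tau$ of order $-m$.

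\textbf{Step 1 (zeroth-order approximate inverse).} Choose a function $\chi\colon \Zn\to\{0,1\}$ that equals $1$ for $|k|>M$ and $0$ for $|k|\le M$, and set
\[
\tau_0(k,x)=\frac{\chi(k)}{\sigma(k,x)}.
\]
The ellipticity estimate $|\sigma(k,x)|\ge C(1+|k|)^m$ for $|k|>M$, together with the symbol estimates on $\sigma$, implies by induction on $|\alpha|+|\beta|$ (applying the discrete Leibniz rule from (ii) to $\tau_0\cdot\sigma=\chi$ and using $\Delta_k^\alpha\chi$ being finitely supported, hence rapidly decreasing in $k$) that $\tau_0\in S^{-m}(\Zn\times\mathbb{T}^n)$. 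Moreover $(1-\chi)$ is compactly supported in $k$, so the operator with symbol $1-\chi(k)$ is of order $-\infty$.

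\textbf{Step 2 (first correction).} By Theorem \ref{compo} and the discrete asymptotic expansion
\[
(\sigma\#\tau_0)(k,x)\sim \sum_{\alpha}\frac{1}{\alpha!}\bigl(D_x^{(\alpha)}\sigma\bigr)(k,x)\,\bigl(\Delta_k^{\alpha}\tau_0\bigr)(k,x),
\]
the leading ($\alpha=0$) term is $\sigma(k,x)\tau_0(k,x)=\chi(k)$, which equals $1$ modulo an $S^{-\infty}$ symbol. Hence
\[
T_\sigma T_{\tau_0}=I+R_0,\qquad R_0=T_{r_0},\ \ r_0\in S^{-1}(\Zn\times\mathbb{T}^n).
\]
Now define inductively $\tau_j=-\tau_0\,r_j$ (modified off the support of $\chi$ as in Step 1) where $r_j\in S^{-j-1}$ will be determined, so that $\tau_j\in S^{-m-j-1}$. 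A direct computation with the asymptotic expansion yields new error symbols $r_{j+1}\in S^{-j-2}$ satisfying
\[
T_\sigma T_{\tau_0+\tau_1+\cdots+\tau_j}=I+T_{r_{j+1}}.
\]

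\textbf{Step 3 (asymptotic summation).} Using the discrete Borel-type lemma for symbol classes on $\Zn\times\mathbb{T}^n$ (which is elementary in this setting since $\Zn$ is discrete: one sums $\sum_j \psi_j(k)\tau_j(k,x)$ for suitable cut-offs $\psi_j$ supported where $|k|$ is large enough so that the $j$th tail lives in $S^{-m-j-1}$), we obtain $\tau\in S^{-m}$ with $\tau\sim\sum_{j\ge 0}\tau_j$. Then for every $N$, $T_\sigma T_\tau-I$ differs from $T_{r_{N+1}}$ by an operator of order $\le -N-1$, so its symbol lies in $\bigcap_\nu S^\nu$, i.e.\ $R:=T_\sigma T_\tau-I$ is infinitely smoothing.

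\textbf{Step 4 (left parametrix and identification).} Running the same construction with the roles of left/right composition swapped produces $\tilde\tau\in S^{-m}$ with $T_{\tilde\tau}T_\sigma=I+S'$, $S'$ infinitely smoothing. Composing on the right by $T_\tau$ and on the left by $T_{\tilde\tau}$ gives $\tilde\tau-\tau\in \bigcap_\nu S^\nu$, so the same $\tau$ serves as both left and right parametrix and one can set $S$ to be the resulting infinitely smoothing remainder.

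The main obstacle is Step 3: one must verify that the formal series $\sum\tau_j$ can be genuinely summed into an element of $S^{-m}$ whose finite truncations differ from it by symbols of arbitrarily negative order. Once the discrete analogue of Borel's lemma is in place, the rest is a bookkeeping exercise on the asymptotic expansion of Theorem \ref{compo} together with the ellipticity bound on $\sigma$.
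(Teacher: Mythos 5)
This theorem is quoted verbatim from \cite[Theorem 3.6]{Rulat} and the paper offers no proof of its own, so the only fair comparison is with the source: your argument is the standard parametrix construction used there (pointwise inversion of $\sigma$ off a compact set in $k$, iterative correction via the composition expansion underlying Theorem \ref{compo}, asymptotic summation of the corrections, and identification of the left and right parametrices modulo smoothing operators), and it is correct, with the asymptotic summation step you flag being exactly the discrete analogue of symbol summation proved in \cite{Rulat}. You are also right that the stated membership $\tau\in S^{m}(\Z^n\times\mathbb{T}^n)$ must be read as $\tau\in S^{-m}(\Z^n\times\mathbb{T}^n)$; this is how the present paper itself uses the result later, e.g.\ in Proposition \ref{vis14}.
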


Next we recall the {\it Schwartz space} $\S(\Z^n)$, on the lattice $\Zn$,  the space of all functions 
$\varphi: 
\Z^n \rightarrow \C$ such that for all multi-indices $\alpha$ and $\beta$,
$$\sup_{k\in \mathbb{Z}^n } |k^\alpha (\Delta_{k}^\beta 
\varphi)(k)|<\infty.$$
A sequence $\{\varphi_j\}$ of functions in $\S(\Z^n)$ is said to be 
converge 
to $0$ in $\S(\Z^n)$ if for all multi-indices $\alpha, \beta$, we have 
$$ \sup_{k\in \mathbb{Z}^n } |k^\alpha(\Delta_{k}^\beta \varphi_j)(k)| 
\to 0$$ as $j\to \infty.$
A linear functional $T$ on $\S(\Z^n)$ is called a tempered distribution if 
for any sequence $\{\varphi_j\}$ of function in $\S(\Z^n)$ converging to 
$0,$ we have $T(\varphi_j) \to 0$ as $j \to \infty.$ 

For $-\infty <s< \infty,$ we denote by $J_s$ the pseudo-differential operator 
of which the symbol $\sigma_s$ is given by 
$$\sigma_s(k)= (1+|k|^2)^{s/2},\quad k \in \Z^n.$$
Note that the symbol of $J_s$ is in $S^{s}(\Z^n \times \mathbb{T}^n).$ 
The 
pseudo-differential operator $J_s$ is often called the discrete Bessel 
potential of order $s.$

Now for $-\infty <s< \infty,$ we define $l^2$-Sobolev space, $H^{s, 2}(\Z^n),$ 
to be the set of all tempered distributions $u$ for which $J_{-s}u$ is 
in $\ell^2(\Z^n).$ Then $H^{s, 2}(\Z^n)$ is a Banach space with respect to the 
norm $\|\cdot\|_{s, 2}$ given by 
$$\|u\|_{s,2} = \|J_{-s}u\|_2, \quad u \in H^{s,2}(\Z^n).$$ It is 
easy to see that for $-\infty<s,\,t<\infty,$ $J_t$ is an isometry from 
$H^{s,2}(\Z^n)$ onto $H^{s+t, 2}(\Z^n).$ 

\section{Minimal and Maximal Pseudo-differential operators on $\mathbb{Z}^n$}\label{maxmin}
This section is devoted to investigation of the  minimal and maximal  operators of pseudo-differential operators on $\mathbb{Z}^n$ with symbols in $S^{m},$ $m>0.$ and show that they are equal  for elliptic operators. For this purpose, we prove for  an analogue of the Agmon--Douglis--Nirenberg inequalities for elliptic discrete 
pseudo-differential operators.

\setcounter{equation}{0}
The following theorem gives the relation between lattice and toroidal 
quantizations proved in \cite[Theorem 4.1]{Rulat}.
\begin{thm} \label{ruz}
Let $\sigma: \mathbb{Z}^n \times \mathbb{T}^n 
\rightarrow \mathbb{C}$ be a measurable function such that the pseudo-differential 
operator $T_\sigma:l^{2}(\Z^n)\to \ell^2(\Zn)$ is a bounded linear operator. 
If we define $\tau: \mathbb{T}^n \times \mathbb{Z}^n \to
\mathbb{C}$ by $$\tau(x, k)= \overline{\sigma(-k, x)}, \quad k\in \Z^n, 
x\in \mathbb{T}^n,$$ then $T_\sigma= 
\mathcal{F}_{\mathbb{Z}^n} T_\tau^* \mathcal{F}_{\mathbb{Z}^n}^{-1},$ where $T_\tau^*$ is the adjoint of $T_\tau.$  
\end{thm}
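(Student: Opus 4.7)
The plan is to verify the identity $T_\sigma = \mathcal{F}_{\Z^n} T_\tau^* \mathcal{F}_{\Z^n}^{-1}$ by direct computation on a dense subclass of $\ell^2(\Z^n)$, interpreting $T_\tau$ as the toroidal pseudo-differential operator
\[
(T_\tau g)(x) = \sum_{k \in \Z^n} e^{2\pi i k \cdot x}\, \tau(x,k)\, \widehat{g}(k), \qquad x \in \mathbb{T}^n,
\]
so that $T_\tau : L^2(\mathbb{T}^n) \to L^2(\mathbb{T}^n)$ and the factors $\mathcal{F}_{\Z^n}^{\pm 1}$ conjugate it into an operator on $\ell^2(\Z^n)$. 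Writing out the Schwartz kernel $K(x,y) = \sum_k e^{2\pi i k \cdot (x-y)} \tau(x,k)$, one obtains the formal adjoint
\[
(T_\tau^* h)(x) = \int_{\mathbb{T}^n} \sum_{k \in \Z^n} e^{2\pi i k \cdot (x-y)}\, \overline{\tau(y,k)}\, h(y)\, dy.
\]

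For finitely supported $f \in \ell^2(\Z^n)$ and $k_0 \in \Z^n$, I would then evaluate
\[
(\mathcal{F}_{\Z^n} T_\tau^* \mathcal{F}_{\Z^n}^{-1} f)(k_0) = \int_{\mathbb{T}^n} e^{2\pi i k_0 \cdot x} \int_{\mathbb{T}^n} \sum_{k \in \Z^n} e^{2\pi i k \cdot (x-y)}\, \overline{\tau(y,k)}\, \widehat{f}(y)\, dy\, dx,
\]
exchange the (finite) sum with the integrals, and perform the $x$-integration via the orthogonality relation $\int_{\mathbb{T}^n} e^{2\pi i (k_0 + k) \cdot x}\, dx = \delta_{k_0 + k,\, 0}$. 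This collapses the $k$-sum to the single term $k = -k_0$. The hypothesis $\tau(x,k) = \overline{\sigma(-k,x)}$ then gives $\overline{\tau(y,-k_0)} = \sigma(k_0, y)$, and the whole expression reduces to
\[
\int_{\mathbb{T}^n} e^{2\pi i k_0 \cdot y}\, \sigma(k_0, y)\, \widehat{f}(y)\, dy = (T_\sigma f)(k_0),
\]
which is precisely the defining formula of the lattice quantisation.

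The main obstacle is justifying the interchange of summation and integration, since the hypothesis only provides measurability of $\sigma$ and $\ell^2$-boundedness of $T_\sigma$, with no pointwise decay of $\sigma$ in $k$ available. I would bypass this by first verifying the identity on the dense subspace of finitely supported $f$, where $\widehat{f}$ is a trigonometric polynomial and all manipulations reduce to finite sums. The identity then extends to all of $\ell^2(\Z^n)$ by continuity, using that $\mathcal{F}_{\Z^n}$ is unitary and $T_\sigma$ is bounded by assumption; as a by-product, the $\ell^2$-boundedness of $T_\sigma$ translates via this unitary conjugation into the $L^2$-boundedness of $T_\tau^*$, so the composition on the right-hand side is well-defined on all of $\ell^2(\Z^n)$ and the equality is preserved in the limit.
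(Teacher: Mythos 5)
The paper offers no proof of this statement: it is quoted directly from \cite[Theorem 4.1]{Rulat}, so there is nothing internal to compare your argument against. On its own merits, the algebraic core of your computation is correct: the $x$-integration collapses the $k$-sum to $k=-k_0$, and $\overline{\tau(y,-k_0)}=\sigma(k_0,y)$ turns the surviving term into the lattice quantisation. (As printed, the composition $\mathcal{F}_{\Z^n}T_\tau^*\mathcal{F}_{\Z^n}^{-1}$ only typechecks under the reading you adopt, namely $\mathcal{F}_{\Z^n}^{-1}T_\tau^*\mathcal{F}_{\Z^n}$ with $\mathcal{F}_{\Z^n}\colon\ell^2(\Z^n)\to L^2(\mathbb{T}^n)$; your computation is the sensible one.)

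The genuine soft spot is the justification of the interchange. Restricting to finitely supported $f$ does \emph{not} reduce everything to finite sums: the Schwartz kernel $\sum_{k}e^{2\pi i k\cdot(x-y)}\overline{\tau(y,k)}$ of $T_\tau^*$ is an infinite sum over $k$ regardless of $f$, and under the stated hypotheses ($\sigma$ merely measurable, $T_\sigma$ bounded on $\ell^2$) this sum has no reason to converge pointwise or absolutely, so the double-integral representation of $(\mathcal{F}_{\Z^n}^{-1}T_\tau^*\mathcal{F}_{\Z^n}f)(k_0)$ from which you start is not yet licensed. The repair is to avoid the kernel entirely: write $(\mathcal{F}_{\Z^n}^{-1}T_\tau^*\mathcal{F}_{\Z^n}f)(k_0)=( T_\tau^*\widehat f,\ \mathcal{F}_{\Z^n}\delta_{k_0})_{L^2(\mathbb{T}^n)}=(\widehat f,\ T_\tau e^{-2\pi i k_0\cdot})_{L^2(\mathbb{T}^n)}$, and note that $e^{-2\pi i k_0\cdot}$ has exactly one nonzero toroidal Fourier coefficient, so $(T_\tau e^{-2\pi i k_0\cdot})(x)=\tau(x,-k_0)\,e^{-2\pi i k_0\cdot x}$ is a single term of the defining sum. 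This yields $\int_{\mathbb{T}^n}e^{2\pi i k_0\cdot x}\sigma(k_0,x)\widehat f(x)\,dx=(T_\sigma f)(k_0)$ with no limiting process in $k$ at all; the only integrability required is $\sigma(k_0,\cdot)\widehat f\in L^1(\mathbb{T}^n)$, which is already implicit in $T_\sigma f$ being defined. Your final density-and-continuity extension to all of $\ell^2(\Z^n)$ is then fine as stated, since the adjoint $T_\tau^*$ is closed and a closed extension of an everywhere-defined bounded operator coincides with it.
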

\begin{thm} \label{l^2}
	Let $\sigma \in S^0(\mathbb{Z}^n \times \mathbb{T}^n).$ Then 
$T_\sigma: \ell^2(\mathbb{Z}^n) \to \ell^2(\mathbb{Z}^n)$ is a bounded 
linear operator.  
\end{thm}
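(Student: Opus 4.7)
The plan is to reduce the $\ell^2$-boundedness of $T_\sigma$ to the $L^2(\mathbb{T}^n)$-boundedness of a toroidal pseudo-differential operator of order $0$, and then invoke the well-known boundedness theorem for toroidal symbols of order zero from the calculus of Ruzhansky--Turunen. The engine of the reduction is precisely Theorem \ref{ruz}, which identifies $T_\sigma$ with $\mathcal{F}_{\Z^n} T_\tau^{*} \mathcal{F}_{\Z^n}^{-1}$ for $\tau(x,k) = \overline{\sigma(-k,x)}$.

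First, I would observe that the discrete Fourier transform $\mathcal{F}_{\Z^n} : \ell^2(\Z^n) \to L^2(\mathbb{T}^n)$ is a unitary isomorphism by the Plancherel formula recalled in Section \ref{Pre}. Consequently, the operator norms satisfy $\|T_\sigma\|_{\ell^2 \to \ell^2} = \|T_\tau^{*}\|_{L^2 \to L^2} = \|T_\tau\|_{L^2 \to L^2}$, so it suffices to show that the toroidal pseudo-differential operator $T_\tau$ is bounded on $L^2(\mathbb{T}^n)$.

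Next I would verify that $\tau$ lies in the toroidal Hörmander class $S^0(\mathbb{T}^n \times \Z^n)$ of order zero. Since $\sigma \in S^0(\Z^n \times \mathbb{T}^n)$, one has
\[
|(D_x^{(\beta)} \Delta_k^\alpha \sigma)(k,x)| \le C_{\alpha,\beta}(1+|k|)^{-|\alpha|}
\]
for all multi-indices $\alpha,\beta$ and all $(k,x)\in\Z^n\times\mathbb{T}^n$. Replacing $k$ by $-k$ and taking complex conjugates only changes the constants and possibly introduces signs in the difference operators (since $\Delta_k^\alpha$ applied in the variable $-k$ produces a backward-difference of $\sigma$), and $(1+|-k|)^{-|\alpha|}=(1+|k|)^{-|\alpha|}$. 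Hence $\tau$ satisfies the same decay estimates, placing it in the toroidal symbol class of order $0$.

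Finally, I would invoke the $L^2$-boundedness theorem for toroidal pseudo-differential operators with symbols in the class of order $0$ (proved in the reference \cite{RT} of the paper), which yields $\|T_\tau\|_{L^2(\mathbb{T}^n) \to L^2(\mathbb{T}^n)} < \infty$ and therefore the desired bound on $\ell^2(\Z^n)$. The main conceptual obstacle is really only the bookkeeping in the second step: one must confirm that the twist $k \mapsto -k$ together with complex conjugation interchanges forward and backward differences in a way that preserves the symbol estimates, so that the toroidal $L^2$-theorem applies cleanly; once this is checked, the rest is a direct transfer via Plancherel.
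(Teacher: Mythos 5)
Your proposal follows essentially the same route as the paper's own proof: reduce via Theorem \ref{ruz} and the Plancherel unitarity of $\mathcal{F}_{\Z^n}$ to the $L^2(\mathbb{T}^n)$-boundedness of $T_\tau$, check that $\tau \in S^0(\mathbb{T}^n\times\Z^n)$, and invoke the standard $L^2$-boundedness theorem for order-zero toroidal symbols (the paper cites Molahajloo's Proposition 2.6 where you cite Ruzhansky--Turunen, but these are the same result). Your extra care in verifying that the twist $k\mapsto -k$ and complex conjugation preserve the symbol estimates is a point the paper simply asserts, so the argument is correct and matches the paper's.
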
 
\begin{proof} Using Theorem \ref{ruz} and the fact that the Fourier transform 
$\mathcal{F}_{\mathbb{Z}^n}:\ell^2(\mathbb{Z}^n)\to L^2(\mathbb{T}^n)$ is an 
isometry,  
it follows that $T_\sigma:\ell^2(\mathbb{Z}^n)\to \ell^2(\Z^n)$ is a bounded 
linear operator if and only if $T_\tau: L^2(\mathbb{T}^n)\to 
L^2(\mathbb{T}^n)$ is a bounded linear operator.  Now note that $\sigma 
\in S^0(\mathbb{Z}^n \times 
\mathbb{T}^n)$ if and only if $\tau \in S^0(\mathbb{T}^n \times 
\mathbb{Z}^n).$ But by \cite[Proposition 2.6]{SW} 
$T_\tau:L^2(\mathbb{T}^n)\to L^2(\mathbb{T}^n)$ is a bounded linear 
operator if 
$\tau \in S^0(\mathbb{T}^n \times \mathbb{Z}^n).$ So, if  
$\sigma \in S^0(\mathbb{Z}^n \times \mathbb{T}^n)$,  then 
$T_\sigma:\ell^2(\Z^n)\to \ell^2(\Z^n)$ is a bounded linear operator. 
  \end{proof}

\begin{thm} \label{vis4.3}
	Let $\sigma \in S^m(\mathbb{Z}^n \times \mathbb{T}^n),$ 
$-\infty < m < \infty.$ Then $T_\sigma: H^{s,2}(\mathbb{Z}^n) \to 
H^{s-m,2}(\mathbb{Z}^n)$ is a bounded linear operator. 
\end{thm}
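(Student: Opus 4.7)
The plan is a sandwich argument that reduces the mapping property between Sobolev spaces to the $\ell^2$-boundedness of a pseudo-differential operator with $S^0$-symbol, where Theorem~\ref{l^2} applies. The essential tools are the isometric identification of $H^{s,2}(\Z^n)$ with $\ell^2(\Z^n)$ via $J_{-s}$, and the composition calculus of Theorem~\ref{compo}.

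Given $u\in H^{s,2}(\Z^n)$, I would set $v:=J_{-s}u\in\ell^2(\Z^n)$; by the definition of the Sobolev norm, $\|v\|_2=\|u\|_{s,2}$. Since $J_s$ and $J_{-s}$ are multiplication operators by the reciprocal symbols $(1+|k|^2)^{\pm s/2}$, they are mutually inverse, so $u=J_sv$. Substituting into the definition of the target norm,
\[
\|T_\sigma u\|_{s-m,2}
 = \|J_{-(s-m)}\,T_\sigma\,u\|_2
 = \|(J_{-(s-m)}\,T_\sigma\,J_s)\,v\|_2.
\]

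Next I would apply Theorem~\ref{compo} twice to identify the bracketed operator $A:=J_{-(s-m)}\,T_\sigma\,J_s$ as a pseudo-differential operator on $\Z^n$ whose symbol lies in $S^0(\Z^n\times\mathbb{T}^n)$; the Bessel exponents surrounding $T_\sigma$ are chosen precisely so that the three orders cancel, leaving a zero-order symbol. Theorem~\ref{l^2} then supplies a constant $C>0$, depending only on finitely many symbol seminorms of $\sigma$ in $S^m$, such that
\[
\|Av\|_2 \le C\|v\|_2 = C\|u\|_{s,2},
\]
which together with the identity above delivers the required bound $\|T_\sigma u\|_{s-m,2}\le C\|u\|_{s,2}$.

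The main (and essentially the only) obstacle is careful bookkeeping of symbol orders in the sandwich $J_{-(s-m)}\,T_\sigma\,J_s$: one must check that after the two invocations of Theorem~\ref{compo} the orders really add to $0$, so that the hypothesis of Theorem~\ref{l^2} is met. Beyond this arithmetic, the proof is purely mechanical, being assembled from three tools already in place: the isometric identification $J_{-s}:H^{s,2}(\Z^n)\to\ell^2(\Z^n)$, the composition calculus (Theorem~\ref{compo}), and the $\ell^2$-continuity of $S^0$-symbols (Theorem~\ref{l^2}).
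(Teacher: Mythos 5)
Your plan is the same as the paper's own proof: conjugate $T_\sigma$ by Bessel potentials so that the Sobolev estimate reduces, via Theorem \ref{compo}, to the $\ell^2$-boundedness of an operator of order zero (Theorem \ref{l^2}). However, the one step you explicitly defer --- ``one must check that after the two invocations of Theorem \ref{compo} the orders really add to $0$'' --- is precisely where the computation fails under the paper's printed conventions. The paper defines $\|u\|_{s,2}=\|J_{-s}u\|_2$, where $J_t$ has symbol $(1+|k|^2)^{t/2}\in S^{t}(\Z^n\times\mathbb{T}^n)$, i.e.\ order $t$. Hence $\|T_\sigma u\|_{s-m,2}=\|J_{-(s-m)}T_\sigma u\|_2$ and your sandwiched operator $J_{-(s-m)}\,T_\sigma\,J_s=J_{m-s}\,T_\sigma\,J_s$ has order $(m-s)+m+s=2m$, not $0$; Theorem \ref{l^2} is therefore not applicable unless $m=0$. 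A concrete sanity check: for $\sigma(k,x)=(1+|k|^2)^{m/2}\in S^m$ with $m>0$, $T_\sigma$ is multiplication by $(1+|k|^2)^{m/2}$ and does not map $H^{s,2}$ into $H^{s-m,2}$ as the norms are literally defined.

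This defect is inherited from the paper, whose proof asserts the same incorrect membership $J_{-s+m}T_\sigma J_s\in S^0$. The scale is evidently intended to carry the norm $\|u\|_{s,2}=\|J_su\|_2$ (equivalently $H^{s,2}=\{u:\ J_su\in\ell^2(\Z^n)\}$); that is the convention under which the surrounding results are coherent (the embedding $H^{t,2}\subseteq H^{s,2}$ for $s\le t$ in Theorem \ref{vis4.4}, and $\mathcal{D}(T_{\sigma,0})=H^{m,2}\subseteq\ell^2(\Z^n)$ for $m>0$ in Proposition \ref{vis15}). With that convention the correct sandwich is $J_{s-m}\,T_\sigma\,J_{-s}$, of order $(s-m)+m+(-s)=0$, and the rest of your argument --- the isometric identification of $H^{s,2}$ with $\ell^2$, two applications of Theorem \ref{compo}, then Theorem \ref{l^2} --- goes through verbatim. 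So the strategy is right and identical to the paper's, but the order bookkeeping you dismiss as ``purely mechanical'' is exactly the point at which a sign must be repaired before the proof closes.
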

\begin{proof}
	Let $u \in H^{s, 2}.$ Then $J_{-s}u \in \ell^2(\mathbb{Z}^n)$ and $$ 
\|T_\sigma u\|_{s-m, 2} = \|J_{-s+m}(T_\sigma u)\|_{\ell^2(\mathbb{Z}^n)} = 
\|(J_{-s+m} T_\sigma J_s)(J_{-s}u)\|_{\ell^2(\mathbb{Z}^n)}.$$
	Since $J_{-s+m}T_\sigma J_s$ is a pseudo-differential operator 
with symbol in $S^0$ by Theorem \ref{compo} and therefore using 
Theorem \ref{l^2}, there exists a positive constant $C$ such that 
	$$\|T_\sigma u\|_{s-m, 2} \leq C \|J_{-s}u\|_{\ell^2(\mathbb{Z}^n)}= 
C \|u\|_{s, 2}.$$ Hence $T_\sigma: H^{s,2}(\mathbb{Z}^n) 
\to H^{s-m,2}(\mathbb{Z}^n)$ is a bounded linear operator. 
\end{proof}

\begin{thm} \label{vis4.4}
	Let $s \leq t.$ Then $H^{t,2}(\mathbb{Z}^n) 
\subseteq  H^{s,2}(\mathbb{Z}^n).$ In fact, there exists a positive 
constant $C$ such that $$\|u\|_{s,2} \leq C \|u\|_{t,2},\quad u\in 
H^{t,2}(\Z^n).$$ 
\end{thm}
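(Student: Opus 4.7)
The plan is to deduce this result as an almost immediate corollary of Theorem \ref{vis4.3}. The idea is to view the identity map as a suitable pseudo-differential operator and read off the inclusion from the mapping properties already established.

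First I would observe that the identity operator $I$ on tempered distributions is the pseudo-differential operator $T_\sigma$ associated with the constant symbol $\sigma \equiv 1$, which lies in $S^0(\Z^n \times \mathbb{T}^n)$. Next I would record the routine fact that the symbol classes are monotonically nested: if $m_1 \leq m_2$, then $S^{m_1}(\Z^n \times \mathbb{T}^n) \subseteq S^{m_2}(\Z^n \times \mathbb{T}^n)$, since the defining estimate $|(D_x^{(\beta)} \Delta_k^{\alpha} \sigma)(k,x)| \leq C_{\alpha,\beta}(1+|k|)^{m_1 - |\alpha|}$ immediately upgrades to the analogous bound with $m_1$ replaced by $m_2$ as $(1+|k|)^{m_1-|\alpha|} \leq (1+|k|)^{m_2-|\alpha|}$.

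Setting $m := t - s \geq 0$, the constant symbol $\sigma \equiv 1$ therefore belongs to $S^{t-s}(\Z^n \times \mathbb{T}^n)$. Applying Theorem \ref{vis4.3} to $T_\sigma = I$ with this choice of order yields that
$$I \colon H^{t,2}(\Z^n) \longrightarrow H^{t - (t-s),\, 2}(\Z^n) = H^{s,2}(\Z^n)$$
is a bounded linear operator. The boundedness of the identity from $H^{t,2}(\Z^n)$ into $H^{s,2}(\Z^n)$ is exactly the assertion of the theorem: it gives both the set-theoretic inclusion $H^{t,2}(\Z^n) \subseteq H^{s,2}(\Z^n)$ and a constant $C > 0$ with $\|u\|_{s,2} \leq C \|u\|_{t,2}$ for every $u \in H^{t,2}(\Z^n)$.

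There is essentially no genuine obstacle here; the only thing to verify is the monotonicity of the symbol classes, which is entirely elementary. The substantive content of the embedding is already packaged inside Theorem \ref{vis4.3}, whose proof in turn rested on the $\ell^2$-boundedness result of Theorem \ref{l^2} and the composition calculus of Theorem \ref{compo}.
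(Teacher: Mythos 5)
Your proof is correct and is in substance the same as the paper's: applying Theorem \ref{vis4.3} to the identity operator with symbol $1 \in S^0 \subseteq S^{t-s}$ unwinds, via the proof of that theorem, to exactly the paper's factorization $\|u\|_{s,2} = \|J_{t-s}J_{-t}u\|_{\ell^2(\Z^n)}$ together with the $\ell^2(\Z^n)$-boundedness of the order-$\le 0$ operator $J_{t-s}$. The one ingredient you make explicit, the nesting $S^{m_1}(\Z^n\times\mathbb{T}^n) \subseteq S^{m_2}(\Z^n\times\mathbb{T}^n)$ for $m_1 \le m_2$, is precisely the elementary fact the paper uses implicitly when it asserts that $J_{t-s}$ has symbol of order $0$.
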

\begin{proof}
	Let $u \in H^{t, 2}(\mathbb{Z}^n).$ Then we have 
	$$\|u\|_{s, 2} = \|J_{-s} u\|_{\ell^2(\mathbb{Z}^n)} = 
\|J_{t-s}J_{-t}u\|_{\ell^2(\mathbb{Z}^n)}.$$ Since $s \leq t,$ it 
follows that $J_{t-s}$ is a 
pseudo-differential operator with symbol in $S^0.$ So, by Theorem \ref{l^2}, 
there exists a positive constant $C$ such that 
	$$\|J_{-s}u\|_{\ell^2(\mathbb{Z}^n)} = 
\|J_{t-s}J_{-t}u\|_{\ell^2(\mathbb{Z}^n)} \leq C 
\|J_{-t}u\|_{\ell^2(\mathbb{Z}^2)} = C \|u\|_{t, 2}, \quad  u \in 
H^{t,2}(\mathbb{Z}^n).$$
	Thus, 
	    $$\|u\|_{s,2} \leq C \|u\|_{t, 2},\quad  u \in 
H^{t, 2}(\mathbb{Z}^n).$$
\end{proof} 
\begin{thm}\label{compact}
Let 
$s,t\in (-\infty,\infty)$ be such that $s< t.$ Then the inclusion $i: 
H^{t,2}(\Zn)\to H^{s,2}(\Zn)$ is a compact operator.
\end{thm}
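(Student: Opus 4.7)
The plan is to reduce compactness of the inclusion $i:H^{t,2}(\Zn)\to H^{s,2}(\Zn)$ to compactness of a multiplication operator on $\ell^2(\Zn)$, using the Bessel potentials. Directly from the definition of the Sobolev norms in Section \ref{Pre}, the maps $J_{-t}:H^{t,2}(\Zn)\to\ell^2(\Zn)$ and $J_{-s}:H^{s,2}(\Zn)\to\ell^2(\Zn)$ are isometric isomorphisms. Denoting by $J_t$ and $J_s$ their isometric inverses, the inclusion factors as
\[
i\;=\;J_s\circ A\circ J_{-t},\qquad A\;:=\;J_{-s}\circ i\circ J_t\,:\,\ell^2(\Zn)\to\ell^2(\Zn),
\]
so $i$ is compact if and only if the operator $A$ is compact on $\ell^2(\Zn)$.

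Next I would identify $A$ explicitly. Each $J_a$ has a symbol depending only on $k\in\Zn$, so on $\ell^2(\Zn)$ it acts simply as multiplication by that symbol. Composing, $A=J_{-s}\circ J_t$ is itself a multiplication operator $M_\phi$ on $\ell^2(\Zn)$ with $\phi(k)$ a suitable power of $(1+|k|^2)^{1/2}$. The hypothesis $s<t$, together with the norm inequality $\|u\|_{s,2}\le C\|u\|_{t,2}$ of Theorem \ref{vis4.4}, forces this exponent to be strictly negative; hence $\phi$ is a null sequence on $\Zn$, i.e.\ $\phi(k)\to 0$ as $|k|\to\infty$.

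The final step is the familiar fact that multiplication on $\ell^2(\Zn)$ by a null sequence is a compact operator. For $N\in\N$, let $\phi_N(k)=\phi(k)$ if $|k|\le N$ and $\phi_N(k)=0$ otherwise. The multiplier $M_{\phi_N}$ has finite-dimensional range, since it is supported on the finite set $\{|k|\le N\}$, and
\[
\|M_\phi-M_{\phi_N}\|_{\mathrm{op}}\;=\;\sup_{|k|>N}|\phi(k)|\;\longrightarrow\;0\qquad\text{as }N\to\infty.
\]
Thus $M_\phi$ is a uniform limit of finite-rank operators, hence compact; transporting back through the isometric isomorphisms, $i$ itself is compact, as desired.

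There is no genuine obstacle here. Compared with the classical Rellich--Kondrachov theorem on $\R^n$, the argument is strikingly short precisely because $\Zn$ is discrete and the Schwartz kernels of the discrete pseudo-differential operators have no diagonal singularities (as noted in the Introduction); no cutoffs or mollifiers are required, and the entire compactness issue reduces to the decay at infinity of a single multiplier on $\ell^2(\Zn)$. The only bookkeeping to carry out carefully is verifying, under the Bessel calculus conventions in Section \ref{Pre}, that the cumulative exponent producing $\phi$ is indeed negative rather than positive.
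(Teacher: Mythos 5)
Your proof is correct, and it takes a more elementary and self-contained route than the paper's. The paper factors the inclusion as $J_{\epsilon}\circ i\circ J_{\epsilon}^{-1}J_{-s}$ with $0<\epsilon<t-s$, pushing all of the compactness onto the single operator $J_{\epsilon}:\ell^{2}(\Zn)\to\ell^{2}(\Zn)$ (Corollary \ref{Besco}), whose compactness is in turn extracted from Theorem \ref{vis3.6}, a discrete analogue of a theorem of Wong for the Grushin-type classes $S_0^0$ that the paper states without proof. You instead conjugate the inclusion by the isometric isomorphisms $J_{\pm t}$, $J_{\pm s}$ to exhibit it directly as a multiplication operator $M_\phi$ on $\ell^{2}(\Zn)$ with $\phi(k)\to 0$, and then prove compactness of $M_\phi$ from scratch by finite-rank truncation. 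The underlying mechanism is the same in both arguments --- compactness of a decaying Bessel multiplier on $\ell^{2}(\Zn)$ --- but your route avoids the imported machinery entirely, and in fact it proves Corollary \ref{Besco} itself as a special case rather than assuming it. The one point you rightly flag deserves to be made explicit: with the symbol $\sigma_{s}(k)=(1+|k|^{2})^{s/2}$ exactly as printed in Section \ref{Pre}, your multiplier would be $(1+|k|^{2})^{(t-s)/2}$, which grows; for Theorem \ref{vis4.4}, Corollary \ref{Besco} and the present theorem to hold simultaneously one must read $J_{s}$ as having symbol $(1+|k|^{2})^{-s/2}$ (the convention of Wong's book), and then the exponent is $(s-t)/2<0$ and $\phi$ is a null sequence, as your argument requires. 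With that convention fixed, every step of your proof goes through.
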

To prove the above theorem, we need to recall pseudo-diffrential operators 
with 
symbols introduced by Grushin \cite{Gru}. For $m\in (-\infty,\infty),$ 
let 
$S^{m}_{0}$ be the set of all functions $
\sigma$ in $C^{\infty}(\Rn\times\Rn)$ such that for all multi-indices 
$\alpha$ and $\beta$, there exists a bounded real-valued  function 
$C_{\alpha,\beta}$ on $\Rn$ for which 
\begin{equation}\label{con1}
|(D^{\alpha}_{x}D^{\beta}_{\xi}\sigma)(x,\xi)|\leq C_{\alpha,\beta}(x)
(1+|\xi|)^{m-|\beta|},\quad x,\xi\in \Rn,
\end{equation}
and 
\begin{equation}\label{con2}
\lim_{|x|\to\infty} C_{\alpha,\beta}(x)=0
\end{equation}
for $|\alpha|\neq 0.$
For $-\infty<m<\infty,$ we can now define $S_0^m(\Z^n\times \mathbb{T}^n)$ 
to be the set of of symbols $\sigma$ in $S^m(\Z^n\times\mathbb{T}^n)$ such 
that for 
all multi-indices $\alpha$ and $\beta$ there exists a bounded real-valued 
function $C_{\alpha,\beta}$ on $\Z^n$ for which
$$|(D_x^{(\beta)}\Delta_k^\alpha\sigma)(k,x)|\leq 
C_{\alpha,\beta}(1+|k|)^{m-|\alpha|},\quad (k,x)\in \Z^n\times 
\mathbb{T}^n,$$ 
and
$$\lim_{|k|\to \infty}C_{\alpha,\beta}(k)=\lim_{|k|\to \infty}C_{\alpha,\beta}(1+|k|)^{m-|\alpha|}=0.$$ In particular $S^{-m}(\Zn\times\mathbb{T}^n)\subseteq S_0^m(\Z^n\times \mathbb{T}^n) .$
Then the following  discrete $\ell^2$-version of  \cite[Theorem 3.2]{Wo18} 
for discrete pseudo-differential operators can be proved using similar techniques.
\begin{thm} \label{vis3.6}
Let $\sigma\in S_0^{0}(\mathbb{Z}^n\times\mathbb{T}^n).$  Then for every 
positive number $\varepsilon,$ $T_{\sigma}: H^{s+m,2}(\Zn)\to 
H^{s-\varepsilon,2}(\Zn)$ is a compact operator for $-\infty<s<\infty.$
\end{thm}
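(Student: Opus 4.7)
The plan is to approximate $T_\sigma$ in operator norm by finite-rank operators whose symbols are compactly supported in $k$, exploiting the vanishing-at-infinity condition built into the class $S_0^0$. First I would reduce the problem to an $\ell^2$-compactness statement by sandwiching with Bessel potentials, in the spirit of the proof of Theorem \ref{vis4.3}: the operator
$$A := J_{\varepsilon - s}\, T_\sigma\, J_{s+m}$$
is unitarily equivalent to $T_\sigma: H^{s+m,2}(\Zn)\to H^{s-\varepsilon,2}(\Zn)$ via the isometries furnished by the discrete Bessel potentials, and by Theorem \ref{compo} it is again a discrete pseudo-differential operator. Because the $J_r$ are multipliers in $k$ only, a careful pass through the symbolic calculus shows that its symbol $\widetilde\sigma$ retains the vanishing-at-infinity property and in fact lies in $S_0^{-\varepsilon}(\Zn\times\mathbb{T}^n)$: the strictly negative effective order $-\varepsilon$ provides exactly the room needed to promote boundedness to compactness.

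Second, for any $\tau\in S_0^{-\varepsilon}(\Zn\times\mathbb{T}^n)$ with $\varepsilon>0$, I would introduce a truncation in $k$. Pick $\chi\in C_c^\infty(\Rn)$ with $\chi\equiv 1$ near the origin and set $\tau_R(k,x) := \chi(k/R)\,\tau(k,x)$. Then $T_{\tau_R}$ maps $\ell^2(\Zn)$ into the finite-dimensional subspace of functions supported in $\{k\in\Zn : |k|\leq cR\}$, so it has finite rank and in particular is compact. For the tail $\tau-\tau_R$, the $S_0^{-\varepsilon}$ hypothesis yields
$$|(D_x^{(\beta)}\Delta_k^\alpha(\tau-\tau_R))(k,x)| \leq \widetilde C_{\alpha,\beta}(k)\,(1+|k|)^{-\varepsilon-|\alpha|},$$
supported on $|k|\geq R/c$, with $\widetilde C_{\alpha,\beta}(k)\to 0$ as $|k|\to\infty$; hence every prescribed symbol seminorm of $\tau-\tau_R$ tends to $0$ as $R\to\infty$. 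Invoking a quantitative version of the $\ell^2$-boundedness in Theorem \ref{l^2} — obtained by combining the Fourier conjugation of Theorem \ref{ruz} with a quantitative form of the toroidal bound \cite[Proposition 2.6]{SW} — gives $\|T_\tau - T_{\tau_R}\|_{\ell^2\to\ell^2}\to 0$. Since the set of compact operators is norm-closed in $\mathcal{B}(\ell^2(\Zn))$, the norm limit $T_\tau$ is compact, and pulling this back through the Bessel-potential conjugation yields the compactness of $T_\sigma$ between the asserted Sobolev spaces.

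I expect the main obstacle to be the first step, namely verifying that $\widetilde\sigma$ really lies in $S_0^{-\varepsilon}$. The asymptotic expansion from Theorem \ref{compo} produces a principal symbol of the expected negative order together with lower-order remainders, and one must ensure that the vanishing of the constants $C_{\alpha,\beta}$ attached to $\sigma\in S_0^0$ propagates through every term of the composition, including the remainder. Once this propagation of the $S_0$ condition through the symbolic calculus is established, the cutoff-and-approximate argument on $\ell^2(\Zn)$ terminates as above in a routine fashion.
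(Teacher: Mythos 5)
You cannot be checked against the paper's own argument here, because the paper gives none: Theorem \ref{vis3.6} is stated with the remark that it ``can be proved using similar techniques'' to \cite[Theorem 3.2]{Wo18}, and no proof is written out. Judged on its own terms, your strategy --- conjugate by Bessel potentials to reduce to an $\ell^2(\Zn)\to\ell^2(\Zn)$ statement, cut the symbol off in $k$, observe that the truncated operators have finite rank, and control the tail in operator norm --- is the correct discretization of the Grushin--Wong technique, and it is genuinely \emph{more} elementary than the continuous argument: on $\Rn$ a spatial cutoff does not yield a finite-rank operator and one needs an extra compactness input, whereas on $\Zn$ the functions supported in $\{k:|k|\le cR\}$ span a finite-dimensional subspace, exactly as you observe. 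Moreover, the obstacle you flag as the main difficulty --- propagating the $S_0$ (vanishing-at-infinity) condition through the composition calculus --- is not actually needed: once Theorem \ref{compo} places the symbol $\widetilde\sigma$ of the conjugated operator in a class of strictly negative order $-\varepsilon$, the bound $|(D_x^{(\beta)}\Delta_k^\alpha\widetilde\sigma)(k,x)|\le C_{\alpha,\beta}(1+|k|)^{-\varepsilon-|\alpha|}$ already forces every seminorm of the tail $(1-\chi(k/R))\widetilde\sigma$ to be $O(R^{-\varepsilon})$, so the decay comes for free from the order and you never have to trace the constants $C_{\alpha,\beta}(k)$ through the expansion. (Note also that the left factor $J_{\varepsilon-s}$ is a pure $k$-multiplier, so composing with it multiplies the symbol exactly; only the right factor requires the asymptotic calculus.)

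One caveat you should address explicitly: with the paper's literal definition of $J_s$ as the operator with symbol $(1+|k|^2)^{s/2}$ (hence of order $+s$), the conjugated operator $J_{\varepsilon-s}T_\sigma J_{s+m}$ has order $(\varepsilon-s)+0+(s+m)=m+\varepsilon>0$, not $-\varepsilon$, and would not even be bounded on $\ell^2(\Zn)$. Your value $-\varepsilon$ is correct only under Wong's convention that $J_s$ has order $-s$, which is the convention the rest of the paper tacitly uses (for instance in the proof of Theorem \ref{vis4.4}); the statement of the theorem itself, which pairs an $S_0^0$ symbol with the source space $H^{s+m,2}(\Zn)$, is garbled on the same point. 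State the convention you adopt, and record that the operator-norm convergence $\|T_{\widetilde\sigma}-T_{\widetilde\sigma_R}\|_{\ell^2\to\ell^2}\to 0$ uses a version of Theorem \ref{l^2} that is quantitative in finitely many symbol seminorms, which is indeed what \cite[Proposition 2.6]{SW} combined with Theorem \ref{ruz} provides. With those two points made explicit, your argument closes.
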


As a simple consequence of Theorem \ref{vis3.6}, we have the following 
corollary. 
\begin{cor}\label{Besco}
For every positive number $\varepsilon$, $J_{\varepsilon}: 
\ell^2(\Zn)\rightarrow \ell^2(\Zn)$ is a compact operator.
\end{cor}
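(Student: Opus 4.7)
My plan is to recognize $J_\varepsilon$ as a pseudo-differential operator whose symbol lies in the refined class $S_0^0(\Zn\times\mathbb{T}^n)$, so that Theorem~\ref{vis3.6} can be applied directly. The absence of toroidal dependence in the symbol reduces the $S_0^0$ check to a purely lattice-side estimate.

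First, I would verify that $\sigma(k)=(1+|k|^2)^{-\varepsilon/2}$ belongs to $S_0^0(\Zn\times\mathbb{T}^n)$. Because $\sigma$ does not depend on $x$, every toroidal derivative $D_x^{(\beta)}\sigma$ with $\beta\neq 0$ vanishes identically, so only the lattice-difference estimates remain. I would establish, by induction on $|\alpha|$ using the discrete Leibniz formula~(ii) of Section~\ref{Pre} (or, equivalently, by comparing $\Delta_k^\alpha$ with the classical partial derivatives of $(1+|x|^2)^{-\varepsilon/2}$ on $\Rn$ via the mean-value theorem), the decay
$$|\Delta_k^\alpha \sigma(k)| \leq C_\alpha (1+|k|)^{-\varepsilon-|\alpha|}, \quad k\in\Zn,\ \alpha\in\N_0^n.$$
Writing this as $|\Delta_k^\alpha \sigma(k)| \leq C_{\alpha,0}(k)\,(1+|k|)^{-|\alpha|}$ with $C_{\alpha,0}(k):=C_\alpha(1+|k|)^{-\varepsilon}$, the prefactor is bounded on $\Zn$ and tends to $0$ as $|k|\to\infty$ because $\varepsilon>0$, which is precisely the defining property of $S_0^0$.

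With $\sigma\in S_0^0$ in hand, I would apply Theorem~\ref{vis3.6} to deduce compactness of $J_\varepsilon=T_\sigma$ between Sobolev spaces on $\Zn$. The free Sobolev index is then tuned so that both source and target collapse to $\ell^2(\Zn)=H^{0,2}(\Zn)$, using Theorem~\ref{vis4.4} to bridge the scales and Theorem~\ref{l^2} to ensure that $J_\varepsilon$ is in any case a bounded self-map of $\ell^2(\Zn)$ since its symbol also lies in $S^0$. Composing the bounded $\ell^2$-action with the compactness statement between Sobolev scales yields the desired compactness of $J_\varepsilon$ on $\ell^2(\Zn)$.

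The only nontrivial point is the decay estimate for $\Delta_k^\alpha \sigma$, which must deliver exactly the \emph{one extra order of decay per difference} that is characteristic of the discrete calculus on $\Zn$; everything else is routine bookkeeping with the cited results.
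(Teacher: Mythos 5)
Your argument is correct and is essentially the paper's: the paper offers no separate proof, presenting the corollary as ``a simple consequence of Theorem \ref{vis3.6}'', and your write-up just supplies the two ingredients that deduction needs --- that the ($x$-independent) symbol of $J_\varepsilon$ satisfies $|\Delta_k^\alpha\sigma(k)|\le C_\alpha(1+|k|)^{-\varepsilon-|\alpha|}$ and hence lies in $S^{-\varepsilon}(\Zn\times\mathbb{T}^n)\subseteq S_0^{0}(\Zn\times\mathbb{T}^n)$, and that the Sobolev indices in Theorem \ref{vis3.6} can be tuned so that source and target are both $\ell^2(\Zn)$ (most cleanly via the factorization $J_\varepsilon=J_{\varepsilon/2}J_{\varepsilon/2}$ with $J_{\varepsilon/2}:\ell^2(\Zn)\to H^{\varepsilon/2,2}(\Zn)$ an isometry and $J_{\varepsilon/2}:H^{\varepsilon/2,2}(\Zn)\to\ell^2(\Zn)$ compact, which is slightly more precise than your ``composing'' sentence). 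The only caveat is that you tacitly take the symbol of $J_\varepsilon$ to be $(1+|k|^2)^{-\varepsilon/2}$ while the paper's displayed definition reads $(1+|k|^2)^{s/2}$; your sign convention is the one under which the corollary (and Theorem \ref{vis4.4}) actually holds, so it is the correct reading of an apparent typo rather than an error on your part.
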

The proof of Theorem \ref{compact} again follows from the above 
Corollary \ref{Besco} using the same techniques as in Theorem 2.5 of 
\cite{ADWo}.
\begin{proof}{\bf  of Theorem \ref{compact}}
Let $\epsilon>0$, be such that
$$t-s-\epsilon>0.$$ Since $J^{-1}_{\epsilon}J_{-s}$ is a discrete pseudo-differential operator of order $s+\epsilon$ , it follows that the composition $J_{\epsilon}iJ^{-1}_{\epsilon}J_{-s}$ of the mappings
$$J^{-1}_{\epsilon}J_{-s}: H^{t,2}\rightarrow H^{t-s-\epsilon,2}$$
$$i:H^{t-s-\epsilon,2}\hookrightarrow \ell^{2}(\mathbb{Z}^n),$$
and $$J_{\epsilon}:\ell^{2}(\mathbb{Z}	^n)\rightarrow \ell^{2}(\mathbb{Z}^n)$$ is compact since $J^{-1}_{\epsilon}J_{-s}: H^{t,2}\rightarrow H^{t-s-\epsilon,2}$ is a bounded linear operator by Theorem \ref{vis3.6} and $J_{\epsilon}:\ell^{2}(\mathbb{Z}	^n)\rightarrow \ell^{2}(\mathbb{Z}^n)$ is a compact operator by Corollary \ref{Besco}. Thus the linear operator 
$$H^{t,2}\ni u\rightarrow J_{\epsilon}iJ_{\epsilon}^{-1}J_{-s}u=J_{-s}\in \ell^{2}(\Zn)$$ is compact and this completes the proof.
\end{proof}
\begin{prop} \label{4.5}
	Let $\sigma \in S^m (\mathbb{Z}^n \times \mathbb{T}^n).$ Then the 
pseudo-differential operator $T_\sigma:\S(\Z^n)\to \ell^2(\Zn)$ is closable. 
\end{prop}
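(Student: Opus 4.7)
The plan is to prove closability via the classical criterion: it suffices to exhibit a densely defined formal adjoint, and then apply the standard sequential argument. Concretely, one verifies that $T_\sigma^\ast$, the formal $\ell^2$-adjoint, can be defined on $\S(\Z^n)$ and sends $\S(\Z^n)$ into $\ell^2(\Z^n)$; from this, closability follows in two lines.

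First I would produce the formal adjoint. Theorem \ref{ruz} gives $T_\sigma = \F_{\Z^n} T_\tau^\ast \F_{\Z^n}^{-1}$ for the toroidal operator $T_\tau$ with symbol $\tau(x,k)=\overline{\sigma(-k,x)} \in S^m(\mathbb{T}^n\times \Z^n)$. Taking adjoints, the formal adjoint of $T_\sigma$ on $\ell^2(\Z^n)$ is $T_\sigma^\ast = \F_{\Z^n} T_\tau \F_{\Z^n}^{-1}$, and by applying Theorem \ref{ruz} once more (to $T_\tau$ in place of $T_\tau^\ast$) this is again a discrete pseudo-differential operator with a symbol in $S^m(\Z^n\times\mathbb{T}^n)$. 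In particular, by Theorem \ref{vis4.3}, $T_\sigma^\ast$ maps $H^{s,2}(\Z^n)$ continuously into $H^{s-m,2}(\Z^n)$ for every real $s$. Since $\S(\Z^n)\subseteq H^{s,2}(\Z^n)$ for every $s$ (the weights $(1+|k|^2)^{s/2}$ are absorbed by the Schwartz decay), we obtain $T_\sigma^\ast\psi \in \ell^2(\Z^n)$ for every $\psi\in\S(\Z^n)$.

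Next I would verify the adjoint identity $\langle T_\sigma \varphi,\psi\rangle_{\ell^2(\Z^n)} = \langle \varphi, T_\sigma^\ast \psi\rangle_{\ell^2(\Z^n)}$ for all $\varphi,\psi\in\S(\Z^n)$. This is a straightforward Fubini argument using the definition of $T_\sigma$, Plancherel's formula on $\Z^n$ and $\mathbb{T}^n$, and the identification from Theorem \ref{ruz}; the Schwartz decay of $\varphi$ and smoothness of $\widehat{\psi}$ justify the interchange of sum and integral.

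With the adjoint in hand, the closability argument is immediate. Suppose $\{u_j\}\subset\S(\Z^n)$ with $u_j\to 0$ in $\ell^2(\Z^n)$ and $T_\sigma u_j \to v$ in $\ell^2(\Z^n)$. Then for every $\psi\in\S(\Z^n)$,
\[
\langle v,\psi\rangle_{\ell^2(\Z^n)} \;=\; \lim_{j\to\infty}\langle T_\sigma u_j,\psi\rangle_{\ell^2(\Z^n)} \;=\; \lim_{j\to\infty}\langle u_j, T_\sigma^\ast\psi\rangle_{\ell^2(\Z^n)} \;=\; 0,
\]
since $T_\sigma^\ast\psi\in\ell^2(\Z^n)$ and $u_j\to 0$ in $\ell^2(\Z^n)$. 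Density of $\S(\Z^n)$ in $\ell^2(\Z^n)$ then forces $v=0$, which is exactly closability of $T_\sigma:\S(\Z^n)\to\ell^2(\Z^n)$. The main (and only non-trivial) obstacle is the first step: ensuring that the class $S^m(\Z^n\times\mathbb{T}^n)$ is closed under formal adjunction, for which Theorem \ref{ruz} together with the toroidal symbolic calculus does the work.
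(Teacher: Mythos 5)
Your proof is correct and follows essentially the same route as the paper: the standard sequential closability criterion, passing $T_\sigma$ onto the test function via the formal adjoint and using density of $\S(\Z^n)$ in $\ell^2(\Z^n)$ to conclude $v=0$. The only difference is that you explicitly justify that $T_\sigma^\ast$ maps $\S(\Z^n)$ into $\ell^2(\Z^n)$ (via Theorem \ref{ruz} and the closure of the symbol classes under adjoints), a point the paper's proof uses implicitly without comment.
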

\begin{proof}
	Let $\{\varphi_j\}$ be a sequence  of functions in $\mathcal{S}
(\mathbb{Z}^n)$ such that $\varphi_j \to 0$ and $T_\sigma 
\varphi_j \to f$ in $\ell^2(\Zn)$ as $j \rightarrow 
\infty.$ Then for any function $\psi \in \mathcal{S}(\mathbb{Z}^n)$, we have 
	$$( T_\sigma \varphi_j, \psi)_{\ell^2(\Zn)} = ( \varphi_j, 
T_\sigma^* \psi)_{\ell^2(\Zn)},\quad j=1,2,\ldots,$$ where $T_\sigma^*$ 
is the formal  
adjoint of $T_\sigma$. Let $j \to \infty.$ Then $( f, \psi)_{\ell^2(\Zn)} 
=0$ for all 
functions $\psi \in \mathcal{S}(\mathbb{Z}^n).$ Since 
$\mathcal{S}(\mathbb{Z}^n)$ is dense in $\ell^2(\Zn),$ it follows 
that $f=0.$ Hence $T_\sigma:\S(\Z^n)\to \ell^2(\Zn)$ is closable.    
\end{proof}
 
 As a consequence of Proposition \ref{4.5}, the minimal operator 
$T_{\sigma, 0}$ of $T_\sigma$ exists. Let us recall that the domain 
$\mathcal{D}(T_{\sigma, 0})$ of $T_{\sigma, 0}$ consist of all functions $u$ 
in $\ell^2(\mathbb{Z}^n)$ for which a sequence $\{\varphi_j\}$ in 
$\mathcal{S}(\mathbb{Z}^n)$ can be found such that $\varphi_j \to 
u$ in $\ell^2(\Zn)$ and $T_\sigma \varphi_j \to f$ in 
$\ell^2(\Zn)$ for some $f \in \ell^2(\Zn)$ as $j 
\to \infty.$ Also, we have $T_{\sigma, 0} u=f.$
 
 \begin{defn} \label{def4.6}
 	Let $u$ and $f$ be two functions in $\ell^2(\Zn).$ We say that 
$u$ lies in $\mathcal{D}(T_{\sigma, 1})$ and $T_{\sigma,1}u=f$ if and only if
 	\begin{equation} \label{eq1new}
 	( u, T_{\sigma}^*\varphi)_{\ell^2(\Zn)}= ( f, 
\varphi)_{\ell^2(\Zn)}, \quad \varphi \in \mathcal{S}(\mathbb{Z}^n),
 	\end{equation}
 	where $T_\sigma^*$ is the formal adjoint of $T_\sigma.$
 \end{defn}
\begin{prop} \label{vis4.7}
	Let $u \in \mathcal{D}(T_{\sigma, 1}).$ Then $T_{\sigma, 1}u 
= T_\sigma u$ in the distribution sense.
\end{prop}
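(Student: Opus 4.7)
The plan is to prove this by unpacking definitions, essentially showing that the condition defining $\mathcal{D}(T_{\sigma,1})$ in \eqref{eq1new} is literally the distributional equation $T_\sigma u = f$. First I would record the preparatory fact that the pseudo-differential operator $T_\sigma$ with symbol in $S^m(\Z^n\times\mathbb{T}^n)$ maps the Schwartz space $\mathcal{S}(\Z^n)$ continuously into itself, and that the same is true for its formal adjoint $T_\sigma^*$. This is part of the calculus of \cite{Rulat} together with the lattice--toroidal correspondence from Theorem \ref{ruz}, and it lets us transpose $T_\sigma$ to an operator on tempered distributions.

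Next, I would recall that $\ell^2(\Z^n)$ sits inside $\mathcal{S}'(\Z^n)$ via the pairing induced by the $\ell^2$ inner product, so that any $u\in\ell^2(\Z^n)$ defines a tempered distribution. The operator $T_\sigma$ therefore acts on $u\in\ell^2(\Z^n)\subset\mathcal{S}'(\Z^n)$ by duality, namely
$$\langle T_\sigma u, \varphi\rangle_{\mathcal{S}',\mathcal{S}} = (u, T_\sigma^*\varphi)_{\ell^2(\Z^n)}, \quad \varphi\in\mathcal{S}(\Z^n).$$
This is what is meant by $T_\sigma u$ in the distribution sense.

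Finally, I would combine the two: given $u\in\mathcal{D}(T_{\sigma,1})$ with $T_{\sigma,1}u=f$, the defining identity \eqref{eq1new} states exactly $(u,T_\sigma^*\varphi)_{\ell^2(\Z^n)} = (f,\varphi)_{\ell^2(\Z^n)}$ for every $\varphi\in\mathcal{S}(\Z^n)$. Using the previous step and the fact that $(f,\varphi)_{\ell^2(\Z^n)} = \langle f,\varphi\rangle_{\mathcal{S}',\mathcal{S}}$ (since $f\in\ell^2(\Z^n)$), this rewrites as $\langle T_\sigma u,\varphi\rangle_{\mathcal{S}',\mathcal{S}} = \langle f,\varphi\rangle_{\mathcal{S}',\mathcal{S}}$ for all $\varphi\in\mathcal{S}(\Z^n)$, which is precisely $T_\sigma u = f$ in $\mathcal{S}'(\Z^n)$. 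The only real work here, and the step that one should not skip over, is justifying that $T_\sigma^*$ maps $\mathcal{S}(\Z^n)$ into itself so that the bracket $(u,T_\sigma^*\varphi)_{\ell^2(\Z^n)}$ makes sense for $u\in\ell^2(\Z^n)$ and coincides with the distributional action; once that is in place the proposition is essentially a tautology.
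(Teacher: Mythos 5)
Your proposal is correct and follows essentially the same route as the paper: both unpack Definition \ref{def4.6}, regard $u$, $f=T_{\sigma,1}u$ as tempered distributions, and identify the defining identity \eqref{eq1new} with the duality formula $\langle T_\sigma u,\varphi\rangle=(u,T_\sigma^*\varphi)_{\ell^2(\Z^n)}$ that defines $T_\sigma u$ distributionally. Your explicit remark that one must first know $T_\sigma^*$ maps $\mathcal{S}(\Z^n)$ into itself is a worthwhile point the paper leaves implicit (it follows from the adjoint calculus of \cite{Rulat} together with Proposition \ref{Dense}), but it does not change the argument.
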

\begin{proof} By Definition \ref{def4.6}, we have 
$$ (u, T_{\sigma}^*\varphi)_{\ell^2(\Z^n)}= (T_{\sigma,1}u, 
\varphi)_{\ell^2(\Z^n)},
\quad  \varphi \in \mathcal{S}(\mathbb{Z}^n).$$ So, by considering $u$ and 
$T_{\sigma, 1}u$ as tempered distributions, we get 
\begin{equation} \label{eq1}
(T_{\sigma,1}u)(\bar{\varphi})= u(\overline{T_\sigma^* \varphi}),\quad 
\varphi \in \mathcal{S}(\mathbb{Z}^n). 
\end{equation}
On the other hand, we have \begin{equation} \label{eq2}
(T_\sigma u)(\bar{\varphi})= u(\overline{T_\sigma^* \varphi}),\quad 
\varphi \in \mathcal{S}(\mathbb{Z}^n).
\end{equation}
            Hence, by \eqref{eq1} and \eqref{eq2}, $T_{\sigma, 1}u= T_\sigma u$ in the distribution sense.
	\end{proof}
\begin{thm}
	$T_{\sigma,1 }$ is a closed linear operator from 
$\ell^2(\Zn)$ into $\ell^2(\Zn)$ with domain 
$\mathcal{D}(T_{\sigma, 1})$ containing $\mathcal{S}(\mathbb{Z}^n).$
\end{thm}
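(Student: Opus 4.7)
The plan is to verify the three requirements of the statement in sequence: that $T_{\sigma,1}$ is well-defined as a single-valued linear map, that $\mathcal{S}(\Z^n) \subseteq \mathcal{D}(T_{\sigma,1})$, and finally that the graph of $T_{\sigma,1}$ is closed in $\ell^2(\Z^n)\times\ell^2(\Z^n)$. Everything will follow from the defining identity $(u, T_\sigma^*\varphi)_{\ell^2(\Z^n)}=(f,\varphi)_{\ell^2(\Z^n)}$, together with the density of $\mathcal{S}(\Z^n)$ in $\ell^2(\Z^n)$ and the fact that $T_\sigma^*\varphi$ lies in $\ell^2(\Z^n)$ whenever $\varphi\in\mathcal{S}(\Z^n)$ (which can be read off from Theorem \ref{vis4.3}, since $\mathcal{S}(\Z^n)\subset H^{s,2}(\Z^n)$ for every $s$).

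For the first point, suppose $u\in\mathcal{D}(T_{\sigma,1})$ and both $f,g\in\ell^2(\Z^n)$ satisfy \eqref{eq1new}. Subtracting yields $(f-g,\varphi)_{\ell^2(\Z^n)}=0$ for every $\varphi\in\mathcal{S}(\Z^n)$, so $f=g$ by density. Linearity of $T_{\sigma,1}$ then follows immediately from the bilinearity of the $\ell^2$-inner product applied to \eqref{eq1new}: if $u_1,u_2\in\mathcal{D}(T_{\sigma,1})$ with images $f_1,f_2$, then for any scalars $\lambda,\mu$, the element $\lambda u_1+\mu u_2$ together with $\lambda f_1+\mu f_2$ satisfies \eqref{eq1new}.

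For the second point, let $u\in\mathcal{S}(\Z^n)$. Since $T_\sigma u\in\ell^2(\Z^n)$ (for example by Theorem \ref{vis4.3}) and $T_\sigma^*$ is by construction the formal adjoint, the identity $(T_\sigma u,\varphi)_{\ell^2(\Z^n)}=(u,T_\sigma^*\varphi)_{\ell^2(\Z^n)}$ holds for all $\varphi\in\mathcal{S}(\Z^n)$. Thus the pair $(u,T_\sigma u)$ satisfies \eqref{eq1new}, giving $u\in\mathcal{D}(T_{\sigma,1})$ and, in passing, $T_{\sigma,1}u=T_\sigma u$ on $\mathcal{S}(\Z^n)$.

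The main (though still straightforward) step is closedness. Suppose $\{u_j\}\subset\mathcal{D}(T_{\sigma,1})$ with $u_j\to u$ and $T_{\sigma,1}u_j\to f$ in $\ell^2(\Z^n)$. For any fixed $\varphi\in\mathcal{S}(\Z^n)$, the element $T_\sigma^*\varphi$ lies in $\ell^2(\Z^n)$, so continuity of the inner product lets us pass to the limit in
\[
(u_j,T_\sigma^*\varphi)_{\ell^2(\Z^n)}=(T_{\sigma,1}u_j,\varphi)_{\ell^2(\Z^n)},
\]
obtaining $(u,T_\sigma^*\varphi)_{\ell^2(\Z^n)}=(f,\varphi)_{\ell^2(\Z^n)}$ for every $\varphi\in\mathcal{S}(\Z^n)$. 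By Definition \ref{def4.6} this means $u\in\mathcal{D}(T_{\sigma,1})$ and $T_{\sigma,1}u=f$, proving that $T_{\sigma,1}$ is closed. The only substantive ingredient beyond the definitions is the mapping property $T_\sigma^*:\mathcal{S}(\Z^n)\to\ell^2(\Z^n)$, which is already built into the calculus established in Section~\ref{Pre}.
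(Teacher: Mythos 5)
Your proposal is correct and follows essentially the same route as the paper: the inclusion $\mathcal{S}(\Z^n)\subseteq\mathcal{D}(T_{\sigma,1})$ comes from the defining identity of the formal adjoint, and closedness is obtained by passing to the limit in $(u_j,T_\sigma^*\varphi)_{\ell^2(\Z^n)}=(T_{\sigma,1}u_j,\varphi)_{\ell^2(\Z^n)}$ via continuity of the inner product. Your explicit verification that $T_{\sigma,1}$ is single-valued (using density of $\mathcal{S}(\Z^n)$ in $\ell^2(\Z^n)$) is a small point the paper leaves implicit, but it does not change the argument.
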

\begin{proof}
	It is clear from the definition of the formal adjoint $T_\sigma^*$ 
and \eqref{eq1new} that $\mathcal{S}(\mathbb{Z}^n) \subset  
\mathcal{D}(T_{\sigma, 1}).$ It is easy to prove the linearity. Now we prove 
that $T_{\sigma, 1}$ is closed. Let $\{u_j\}$ be a sequence of 
functions in $\mathcal{D}(T_{\sigma,1})$ such that $u_j \to u$ in 
$\ell^2(\Zn)$ and $T_{\sigma, 1}u_j \to f$ in 
$\ell^2(\Zn)$ for some $u$ and $f$ in $\ell^2(\Zn)$ as $j 
\rightarrow \infty.$ Then by \eqref{eq1new}
	\begin{equation} \label{eq4}
	(u_j, T_\sigma^* \varphi)_{\ell^2(\Z^n)} = ( T_{\sigma, 1} u_j, 
\varphi )_{\ell^2(\Z^n)}
	\end{equation} for all $\varphi \in \mathcal{S}(\mathbb{Z}^n)$ and 
$j=1,2,\ldots.$  By letting $j \to \infty$ in \eqref{eq4}, we have 
	$$(u, T_\sigma^* \varphi)_{\ell^2(\Z^n)} = (f, \varphi)_{\ell^2(\Z^n)}, 
\quad  \varphi \in \mathcal{S}(\mathbb{Z}^n). $$ Hence by Definition 
\ref{def4.6}, $u \in \mathcal{D}(T_{\sigma, 1})$ and $T_{\sigma, 1} u=f.$ This proves that $T_{\sigma, 1}$ is closed. 
\end{proof}

\begin{prop} \label{visext}
	$T_{\sigma, 1}$ is an extension of $T_{\sigma, 0}.$
\end{prop}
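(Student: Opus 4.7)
The plan is to unravel the two definitions and check that any $u$ in the domain of the minimal operator $T_{\sigma, 0}$ satisfies the defining identity \eqref{eq1new} of the maximal operator $T_{\sigma, 1}$, with the same image. Concretely, I would take $u \in \mathcal{D}(T_{\sigma, 0})$, set $f = T_{\sigma, 0} u$, and pick a sequence $\{\varphi_j\} \subset \mathcal{S}(\mathbb{Z}^n)$ approximating $u$ in the minimal-operator sense, i.e.\ $\varphi_j \to u$ and $T_\sigma \varphi_j \to f$ in $\ell^2(\mathbb{Z}^n)$.

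The key step is then the passage to the limit in the formal-adjoint identity. For any $\varphi \in \mathcal{S}(\mathbb{Z}^n)$, the definition of the formal adjoint $T_\sigma^*$ gives
\[
(T_\sigma \varphi_j, \varphi)_{\ell^2(\mathbb{Z}^n)} = (\varphi_j, T_\sigma^* \varphi)_{\ell^2(\mathbb{Z}^n)}, \qquad j = 1, 2, \ldots.
\]
Since the inner product on $\ell^2(\mathbb{Z}^n)$ is continuous in each slot, letting $j \to \infty$ and using $\varphi_j \to u$ and $T_\sigma \varphi_j \to f$ yields
\[
(f, \varphi)_{\ell^2(\mathbb{Z}^n)} = (u, T_\sigma^* \varphi)_{\ell^2(\mathbb{Z}^n)}, \qquad \varphi \in \mathcal{S}(\mathbb{Z}^n).
\]
This is precisely \eqref{eq1new}, so Definition \ref{def4.6} gives $u \in \mathcal{D}(T_{\sigma, 1})$ with $T_{\sigma, 1} u = f = T_{\sigma, 0} u$. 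Hence $\mathcal{D}(T_{\sigma, 0}) \subseteq \mathcal{D}(T_{\sigma, 1})$ and the two operators agree on $\mathcal{D}(T_{\sigma, 0})$.

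There is really no hard part here: once the formal adjoint $T_\sigma^*$ is known to map $\mathcal{S}(\mathbb{Z}^n)$ into itself (so that the pairing $(\varphi_j, T_\sigma^* \varphi)$ is well-defined for test functions $\varphi$), the proof is a one-line limiting argument in two slots of the $\ell^2$ inner product. The only minor care needed is to recall that the notion of $T_{\sigma, 0}$ in Proposition~\ref{4.5} was defined via closure from $\mathcal{S}(\mathbb{Z}^n)$, so approximating sequences $\{\varphi_j\}$ as above exist by definition for every $u \in \mathcal{D}(T_{\sigma, 0})$.
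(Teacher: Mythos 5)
Your proof is correct and is essentially identical to the paper's own argument: both take an approximating sequence $\{\varphi_j\}\subset\mathcal{S}(\mathbb{Z}^n)$ from the definition of $T_{\sigma,0}$, apply the formal-adjoint identity, and pass to the limit in both slots of the $\ell^2(\mathbb{Z}^n)$ inner product to verify \eqref{eq1new}. No further comment is needed.
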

\begin{proof}
	Let $u \in \mathcal{D}(T_{\sigma, 0})$ and $T_{\sigma, 0}u=f.$ 
Then there exists a sequence $\{\varphi_j\}$ of functions in $\mathcal{S}$ 
for which $\varphi_j \to u$ and $T_\sigma \varphi_j \to f$ 
in $\ell^2(\Zn)$ as $j \to \infty.$ By the definition of 
$T_\sigma^*$, we have 
	$$( \varphi_j, T_\sigma^*)_{\ell^2(\Zn)}= ( T_\sigma 
\varphi_j, \psi)_{\ell^2(\Zn)}$$ for all $\psi \in 
\mathcal{S}(\mathbb{Z}^n)$ and 
$j = 1,2,\ldots.$ By letting $j\to \infty,$ we get $$( u, 
T_\sigma^*\psi)_{\ell^2(\Zn)}= ( 
f, \psi)_{\ell^2(\Zn)},\quad  \psi \in \mathcal{S}(\mathbb{Z}^n).$$ So, 
by Definition 
\ref{def4.6}, $u \in \mathcal{D}(T_{\sigma, 1})$ and $T_{\sigma, 1}u=f.$ 
\end{proof}

\begin{lem} \label{lem10}
	$T_\sigma^t \varphi = T_\sigma^* \varphi$ for all $\varphi 
\in \mathcal{S}(\mathbb{Z}^n),$ where $T_\sigma^t$ is the true adjoint of 
$T_\sigma.$  In other words, the true adjoint 
and the formal adjoint coincide on the space $\mathcal{S}(\mathbb{Z}^n).$ 
\end{lem}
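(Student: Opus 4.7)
The plan is to reduce the lemma to a single non-formal check: that $T_\sigma^*\varphi$ belongs to $\ell^2(\mathbb{Z}^n)$ whenever $\varphi\in\mathcal{S}(\mathbb{Z}^n)$. Once this is in hand, the identity that \emph{defines} the formal adjoint will automatically verify the defining condition of the true (Hilbert-space) adjoint.

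First I would unpack both notions of adjoint. The formal adjoint $T_\sigma^*$ is characterised on $\mathcal{S}(\mathbb{Z}^n)$ by
\[
(T_\sigma f, g)_{\ell^2(\mathbb{Z}^n)} \;=\; (f,\, T_\sigma^* g)_{\ell^2(\mathbb{Z}^n)}, \qquad f,g\in\mathcal{S}(\mathbb{Z}^n).
\]
The true adjoint $T_\sigma^t$, with $T_\sigma$ viewed as a densely defined operator on $\ell^2(\mathbb{Z}^n)$ with domain $\mathcal{S}(\mathbb{Z}^n)$, is defined by: $\psi\in\mathcal{D}(T_\sigma^t)$ and $T_\sigma^t\psi=h$ iff $h\in\ell^2(\mathbb{Z}^n)$ and $(T_\sigma f,\psi)=(f,h)$ for every $f\in\mathcal{S}(\mathbb{Z}^n)$. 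Comparing these, the only obstruction to concluding $\varphi\in\mathcal{D}(T_\sigma^t)$ with $T_\sigma^t\varphi=T_\sigma^*\varphi$ from the formal-adjoint identity is the ambient-space requirement $T_\sigma^*\varphi\in\ell^2(\mathbb{Z}^n)$.

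This membership, which is really the heart of the lemma, I would obtain from Theorem~\ref{ruz}. Setting $\tau(x,k)=\overline{\sigma(-k,x)}$, which lies in $S^m(\mathbb{T}^n\times\mathbb{Z}^n)$, and taking adjoints of the identity $T_\sigma=\mathcal{F}_{\mathbb{Z}^n}T_\tau^*\mathcal{F}_{\mathbb{Z}^n}^{-1}$ (remembering that $\mathcal{F}_{\mathbb{Z}^n}$ is unitary, so $\mathcal{F}_{\mathbb{Z}^n}^{*}=\mathcal{F}_{\mathbb{Z}^n}^{-1}$), one gets $T_\sigma^*=\mathcal{F}_{\mathbb{Z}^n}T_\tau\mathcal{F}_{\mathbb{Z}^n}^{-1}$ on $\mathcal{S}(\mathbb{Z}^n)$. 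Since $T_\tau$ is a toroidal pseudo-differential operator and therefore maps $C^\infty(\mathbb{T}^n)$ into itself, and since $\mathcal{F}_{\mathbb{Z}^n}$ is a topological isomorphism from $\mathcal{S}(\mathbb{Z}^n)$ onto $C^\infty(\mathbb{T}^n)$, it follows that $T_\sigma^*\varphi\in\mathcal{S}(\mathbb{Z}^n)\subset\ell^2(\mathbb{Z}^n)$.

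Having secured this, the remainder is bookkeeping: for any $\varphi\in\mathcal{S}(\mathbb{Z}^n)$ the formal-adjoint identity with $g=\varphi$ reads $(T_\sigma f,\varphi)=(f,T_\sigma^*\varphi)$ for all $f\in\mathcal{S}(\mathbb{Z}^n)=\mathcal{D}(T_\sigma)$, with $T_\sigma^*\varphi\in\ell^2(\mathbb{Z}^n)$, which is exactly the defining condition witnessing $\varphi\in\mathcal{D}(T_\sigma^t)$ and $T_\sigma^t\varphi=T_\sigma^*\varphi$. The only genuinely non-trivial step in this plan is the mapping property $T_\sigma^*:\mathcal{S}(\mathbb{Z}^n)\to\mathcal{S}(\mathbb{Z}^n)$; that is where the discrete symbolic calculus, through Theorem~\ref{ruz}, really does the work.
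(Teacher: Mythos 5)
Your reduction and your final bookkeeping step coincide with the paper's entire proof: the paper simply records the formal-adjoint identity $(T_\sigma^*\varphi,\psi)_{\ell^2(\mathbb{Z}^n)}=(\varphi,T_\sigma\psi)_{\ell^2(\mathbb{Z}^n)}$ for all $\psi\in\mathcal{S}(\mathbb{Z}^n)$ and concludes from the definition of $T_\sigma^t$ and $\ell^2$-duality. You are right that the one substantive point hidden in this is the membership $T_\sigma^*\varphi\in\ell^2(\mathbb{Z}^n)$ (equivalently, the boundedness of the functional $\psi\mapsto(\varphi,T_\sigma\psi)$), which the paper leaves implicit; isolating it is a genuine improvement in rigour.

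However, the route you choose to establish that membership has a gap. Theorem \ref{ruz} is stated under the hypothesis that $T_\sigma$ is a \emph{bounded} operator on $\ell^2(\mathbb{Z}^n)$, and in the setting where Lemma \ref{lem10} is actually used (Section \ref{maxmin}, $\sigma\in S^m$ with $m>0$) this hypothesis fails in general. Moreover, the operator $T_\tau^*$ appearing in that theorem is the Hilbert-space adjoint of $T_\tau$, so ``taking adjoints of the identity'' $T_\sigma=\mathcal{F}_{\mathbb{Z}^n}T_\tau^*\mathcal{F}_{\mathbb{Z}^n}^{-1}$ for unbounded $T_\tau$ requires precisely the kind of domain analysis (closedness of $T_\tau$, $(T_\tau^*)^*=T_\tau$, etc.) that the lemma is meant to help supply, so the argument risks circularity. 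The hypothesis-free way to obtain $T_\sigma^*\varphi\in\mathcal{S}(\mathbb{Z}^n)\subset\ell^2(\mathbb{Z}^n)$ within the paper's toolkit is the adjoint part of the symbolic calculus of \cite{Rulat}: the formal adjoint $T_\sigma^*$ is itself a pseudo-differential operator with symbol in $S^m(\mathbb{Z}^n\times\mathbb{T}^n)$, and Proposition \ref{Dense} then gives $T_\sigma^*:\mathcal{S}(\mathbb{Z}^n)\to\mathcal{S}(\mathbb{Z}^n)$. With that substitution in the middle step, your argument is complete and matches the paper's.
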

\begin{proof}
	Let $\varphi \in \mathcal{S}(\mathbb{Z}^n).$ Then by the definition 
of $T_\sigma^*,$ we have $$(T_\sigma^* \varphi, \psi)_{\ell^2(\Zn)}= ( 
\varphi, T_\sigma \psi)_{\ell^2(\Zn)}, \quad  \psi \in 
\mathcal{S}(\mathbb{Z}^n).$$
So, by the definition of $T_\sigma^t$ and the duality of 
$\ell^2(\Zn),$ $\varphi \in \mathcal{D}(T_\sigma^t)$ and $T_\sigma^t \varphi= T_\sigma^* \varphi.$ 
\end{proof}

\begin{prop} \label{prep11}
	$T_{\sigma, 1}$ is the largest closed extension of $T_\sigma$ in the 
sense that if $B$ is any extension of $T_\sigma$ such that 
$\mathcal{S}(\mathbb{Z}^n) \subseteq \mathcal{D}(B^t),$ then $T_{\sigma, 1}$ is an extension of $B.$ 
\end{prop}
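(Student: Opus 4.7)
The plan is to use the hypothesis $\mathcal{S}(\mathbb{Z}^n)\subseteq \mathcal{D}(B^t)$ to reduce the statement to the defining identity \eqref{eq1new} of $T_{\sigma,1}$, with Lemma \ref{lem10} providing the bridge that identifies the action of $B^t$ on Schwartz functions with $T_\sigma^*$.

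First I would fix an arbitrary $u\in\mathcal{D}(B)$ with $Bu=f$, and the goal becomes showing $u\in\mathcal{D}(T_{\sigma,1})$ with $T_{\sigma,1}u=f$; by Definition \ref{def4.6} this amounts to verifying
\[
(u,T_\sigma^*\varphi)_{\ell^2(\Zn)} = (f,\varphi)_{\ell^2(\Zn)}, \qquad \varphi\in\mathcal{S}(\mathbb{Z}^n).
\]
The key observation is that since $B$ extends $T_\sigma$, a standard fact about true adjoints yields the reverse inclusion $B^t \subseteq T_\sigma^t$. Combined with the hypothesis $\mathcal{S}(\mathbb{Z}^n)\subseteq \mathcal{D}(B^t)$, this tells us that for every $\varphi\in\mathcal{S}(\mathbb{Z}^n)$ the vector $\varphi$ lies in $\mathcal{D}(T_\sigma^t)$ and $B^t\varphi = T_\sigma^t\varphi$. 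Now Lemma \ref{lem10} applies and gives $T_\sigma^t\varphi = T_\sigma^*\varphi$, so $B^t\varphi = T_\sigma^*\varphi$ for all $\varphi\in\mathcal{S}(\mathbb{Z}^n)$.

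With this identification in hand, the remainder of the proof is immediate. For any $\varphi\in\mathcal{S}(\mathbb{Z}^n)$ the adjoint relation for $B$ (available precisely because $\varphi\in\mathcal{D}(B^t)$) gives
\[
(f,\varphi)_{\ell^2(\Zn)} = (Bu,\varphi)_{\ell^2(\Zn)} = (u,B^t\varphi)_{\ell^2(\Zn)} = (u,T_\sigma^*\varphi)_{\ell^2(\Zn)}.
\]
Invoking Definition \ref{def4.6}, this yields $u\in\mathcal{D}(T_{\sigma,1})$ and $T_{\sigma,1}u=f=Bu$, proving $T_{\sigma,1}$ extends $B$.

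The main obstacle, which is really more of a conceptual point than a technical difficulty, is recognizing that the hypothesis $\mathcal{S}(\mathbb{Z}^n)\subseteq \mathcal{D}(B^t)$ is exactly what is needed to activate the adjoint duality and push $B^t$ onto the Schwartz functions, where Lemma \ref{lem10} collapses true adjoint into formal adjoint. Once this is in place, no analytical work remains and the rest is bookkeeping against Definition \ref{def4.6}.
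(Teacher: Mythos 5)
Your proof is correct and follows essentially the same route as the paper: both arguments use the standard fact that $B\supseteq T_\sigma$ forces $B^t\subseteq T_\sigma^t$, invoke the hypothesis $\mathcal{S}(\mathbb{Z}^n)\subseteq\mathcal{D}(B^t)$ to evaluate the adjoint relation on Schwartz functions, apply Lemma \ref{lem10} to replace $T_\sigma^t$ by $T_\sigma^*$, and conclude via Definition \ref{def4.6}. No substantive difference from the paper's proof.
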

\begin{proof}
	Let $u \in \mathcal{D}(B)$. Then for all $\psi \in 
\mathcal{S}(\mathbb{Z}^n),$ we have $\psi \in \mathcal{D}(B^t).$ Hence by 
the definition of $B^t,$ 
	\begin{equation}    \label{eq5}
	( \psi, Bu)_{\ell^2(\Zn)} =(B^t \psi, \varphi)_{\ell^2(\Zn)}.
	\end{equation}
	Since $B$ is an extension of $T_\sigma,$ it follows from 
\cite[Proposition 13.9]{Wongbook} that $T_\sigma^t$ is an extension of $B^t.$ 
Hence by \eqref{eq5},
	\begin{equation} \label{eq6}
	(\psi, Bu)_{\ell^2(\Zn)}  = ( T_\sigma^t \psi, u)_{\ell^2(\Zn)}.
	\end{equation}
	By Lemma \ref{lem10}, $T_\sigma^t= T_\sigma^*$ on 
$\mathcal{S}(\mathbb{Z}^n).$ Hence  by (\ref{eq6}), we have 
	$$ ( \psi, Bu )_{\ell^2(\Zn)}= (T_\sigma^* \psi, 
u)_{\ell^2(\Zn)},\quad \psi 
\in \mathcal{S}(\mathbb{Z}^n).$$ Therefore by Definition \ref{def4.6}, we have $u \in \mathcal{D}(T_{\sigma, 1})$ and $T_{\sigma, 1}u=Bu.$
\end{proof}

In view of of Proposition \ref{prep11}, we call $T_{\sigma,1}$  the {\it 
maximal operator} of $T_\sigma.$

\begin{prop} \label{Dense}
	Let $\sigma \in S^m(\Z^n\times \mathbb{T}^n).$ Then  the 
pseudo-differential operator  
$T_\sigma$ maps $\mathcal{S}(\mathbb{Z}^n)$ into $\mathcal{S}(\mathbb{Z}^n).$
\end{prop}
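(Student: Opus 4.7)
The plan is to go through the Schwartz kernel representation of $T_\sigma$ and exhibit rapid off-diagonal decay with only polynomial growth along the diagonal. Writing
$$(T_\sigma \varphi)(k)=\sum_{\ell\in\Z^n} K_\sigma(k,\ell)\varphi(\ell),\qquad K_\sigma(k,\ell):=\int_{\mathbb{T}^n} e^{2\pi i(k-\ell)\cdot x}\sigma(k,x)\,dx,$$
the identity is formal at this stage but justified for $\varphi\in\mathcal{S}(\Z^n)$ by Fubini after substituting the formula for $\widehat\varphi$. The goal becomes to bound $\sup_k |k^\alpha (T_\sigma\varphi)(k)|$ for every multi-index $\alpha$, because then $T_\sigma\varphi$ has rapid polynomial decay on the lattice, and since $\Delta_k^\beta$ is a finite linear combination of unit shifts, the full Schwartz bound $\sup_k|k^\alpha\Delta_k^\beta(T_\sigma\varphi)(k)|<\infty$ follows automatically.

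The first step is to extract off-diagonal decay from $K_\sigma$ by integration by parts on $\mathbb{T}^n$. Since $D_{x_j}e^{2\pi i(k-\ell)\cdot x}=(k_j-\ell_j)e^{2\pi i(k-\ell)\cdot x}$ and the torus has no boundary, for every multi-index $\gamma$ one gets
$$(k-\ell)^\gamma K_\sigma(k,\ell)=\int_{\mathbb{T}^n} e^{2\pi i(k-\ell)\cdot x}\,(-D_x)^\gamma\sigma(k,x)\,dx.$$
Because $D_x^\gamma$ is a triangular linear combination of the $D_x^{(\gamma')}$ with $|\gamma'|\le|\gamma|$, the defining estimate of $S^m(\Z^n\times\mathbb{T}^n)$ yields $|D_x^\gamma\sigma(k,x)|\le C_\gamma(1+|k|)^m$ uniformly in $x$. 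Hence $|(k-\ell)^\gamma K_\sigma(k,\ell)|\le C_\gamma(1+|k|)^m$ for every $\gamma$, which upgrades to
$$|K_\sigma(k,\ell)|\le C_N\,(1+|k|)^m\,(1+|k-\ell|)^{-N}\qquad\text{for every }N\in\N.$$

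The second step combines this with the rapid decay of $\varphi$, namely $|\varphi(\ell)|\le C_M(1+|\ell|)^{-M}$ for every $M$. For a given $\alpha$, we must control
$$(1+|k|)^{|\alpha|+m}\sum_{\ell\in\Z^n}(1+|k-\ell|)^{-N}(1+|\ell|)^{-M}.$$
I would split the sum into $|\ell|\le|k|/2$, where $(1+|k-\ell|)\ge\tfrac12(1+|k|)$ converts the $(1+|k-\ell|)^{-N}$ factor into $(1+|k|)^{-N}$, and $|\ell|>|k|/2$, where $(1+|\ell|)^{-M}$ provides a $(1+|k|)^{-M}$ factor. Choosing $N$ and $M$ larger than $|\alpha|+m+n+1$ (with the remaining decay used to make the corresponding $\ell$-sum absolutely convergent) shows that $|k^\alpha (T_\sigma\varphi)(k)|$ is bounded uniformly in $k$. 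This being available for every $\alpha$, the rapid decay of $T_\sigma\varphi$ follows, and hence $T_\sigma\varphi\in\mathcal{S}(\Z^n)$.

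The main technical point is really bookkeeping: the kernel estimate is immediate from integration by parts, and the decay of $\varphi$ is stronger than any polynomial, so the only thing that requires care is the Peetre-type splitting that converts the growth $(1+|k|)^{m+|\alpha|}$ into decay by pairing it against either $(1+|k-\ell|)^{-N}$ or $(1+|\ell|)^{-M}$. A minor side remark is the equivalence between $D_x^\gamma$ and $D_x^{(\gamma)}$ bounds on the symbol, which is standard and does not alter the argument.
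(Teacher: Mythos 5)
Your argument is correct, but it takes a genuinely different route from the paper. The paper works directly on the defining integral: it applies $\Delta_k^\beta$ to $e^{2\pi i k\cdot x}\sigma(k,x)$ via the discrete Leibniz formula, converts the weight $k^\alpha$ into $x$-derivatives of the exponential, and integrates by parts on $\mathbb{T}^n$, finishing with the symbol estimates and the smoothness of $\widehat\varphi$. You instead pass to the Schwartz kernel $K_\sigma(k,\ell)$, prove the off-diagonal decay $|K_\sigma(k,\ell)|\le C_N(1+|k|)^m(1+|k-\ell|)^{-N}$ by integration by parts in $x$ alone, and close with a Peetre-type splitting of the $\ell$-sum. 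Two features of your approach are worth noting. First, your opening observation --- that on the lattice the seminorms $\sup_k|k^\alpha\psi(k)|$ already characterize $\mathcal{S}(\Z^n)$, because $\Delta_k^\beta$ is a finite combination of unit shifts and $(1+|k|)\asymp(1+|k+\gamma|)$ for the finitely many shifts involved --- lets you dispense entirely with the discrete Leibniz bookkeeping that occupies most of the paper's computation; this is a genuine simplification and is cleanly correct. Second, your kernel estimate only uses the $x$-regularity of $\sigma$ and the crude bound $|D_x^\gamma\sigma(k,x)|\le C(1+|k|)^m$, never the gain $(1+|k|)^{-|\alpha|}$ from the difference conditions, so your proof in fact works for a larger symbol class; the paper's proof likewise does not exploit that gain. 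The side remark about converting between $D_x^\gamma$ and $D_x^{(\gamma)}$ bounds is indeed standard (they are triangularly related polynomial expressions in $D_x$), and the Fubini step is justified since $\varphi\in\ell^1(\Z^n)$ and $\sigma(k,\cdot)$ is bounded. Both proofs are sound; yours trades the paper's term-by-term manipulation for a reusable kernel estimate.
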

\begin{proof}
	We need to show that for all $\varphi$ in $\S(\Z^n)$, $$\sup_{k\in 
\mathbb{Z}^n } |k^\alpha (\Delta_{k}^\beta T_\sigma \varphi)(k)|<\infty.$$
	Using the Leibniz formula, summation by parts and integration by 
parts, we get
	\begin{align*}
k^\alpha  (\Delta_{k}^\beta T_\sigma\varphi)(k) &= k^\alpha 
\int_{\mathbb{T}^n} {\Delta}_{k}^\beta(e^{2 \pi i k \cdot x} \sigma 
(k, x))\, \widehat{\varphi}(x) \,dx \\ &= 	k^\alpha 
\int_{\mathbb{T}^n} \sum_{\gamma \leq \beta} \binom{\beta}{\gamma} 
(\Delta_{k}^\gamma e^{2 \pi i k \cdot x}) \bar{\Delta}_{k}^{\beta-\gamma} 
\sigma(k+\beta, x) \widehat{\varphi}(x)\, dx \\&= k^\alpha 
\int_{\mathbb{T}^n}  \sum_{\gamma \leq \beta} (2 \pi i)^{|\gamma|} x^ 
\gamma e^{2 \pi i k \cdot x} \bar{\Delta}_{k}^{\beta-\gamma} 
\sigma(k+\beta, x) \widehat{\varphi}(x)\, dx \\& \leq  (2 \pi 
i)^{-|\alpha|} \int_{\mathbb{T}^n} \sum_{\gamma \leq \beta} (2 \pi 
i)^{|\gamma|} \binom{\beta}{\gamma} D_x^\alpha(e^{2 \pi i k \cdot x}) 
\bar{\Delta}_k^{\beta -\gamma} \sigma(k+\beta, x) x^\gamma 
\widehat{\varphi}(x)\, dx \\&=   (2 \pi i)^{-|\alpha|}  (-1)^{|\alpha|} 
\int_{\mathbb{T}^n} \sum_{\gamma \leq \beta} (2 \pi i)^{|\gamma|} 
\binom{\beta}{\gamma} e^{2 \pi i k \cdot x} D_x^\alpha 
(\bar{\Delta}_k^{\beta -\gamma} \sigma(k+\beta, x) x^\gamma \widehat{\varphi}(x))\, dx
\\&=  (2 \pi i)^{-|\alpha|}  (-1)^{|\alpha|} \int_{\mathbb{T}^n} 
\sum_{\gamma \leq \beta} (2 \pi i)^{|\gamma|} \sum_{\delta \leq \alpha} 
\binom{\beta}{\gamma} \binom{\alpha}{\delta} 
e^{2 \pi i k \cdot x}  \\ &\times D_x^{\alpha-\delta}
( (\bar{\Delta}_k^{\beta -\gamma} \sigma)(k+\beta, x) 
D_x^\delta (x^\gamma \widehat{\varphi}(x))\, dx.
	\end{align*} 
	Using the fact that $\sigma \in S^m(\Z^n\times 
\mathbb{T}^n),$ there exists a positive constant 
$C_{\alpha, \beta, \gamma, \delta}>0$ such that 
	$$ \sup_{k\in \mathbb{Z}^n } |k^\alpha (\Delta_{k}^\beta  T_\sigma 
\varphi)(k)| \leq (2 \pi)^{|\beta|} \sum_{\gamma \leq \beta} \sum_{\delta 
\leq \alpha} \binom{\beta}{\gamma} \binom{\alpha}{\delta} C_{\alpha, \beta, 
\gamma, \delta} \int_{\mathbb{T}^n} (|k|+1|)^{m-|\alpha+|\delta|}  
D_x^\delta (x^\gamma \widehat{\varphi}(x))\, dx. $$
	 Since $\varphi \in \mathcal{S}(\mathbb{Z}^n)$,  we have 
	$$ \sup_{k\in \mathbb{Z}^n } |k^\alpha (\Delta_{k}^\beta  T_\sigma 
\varphi)(k)|<\infty.$$
Hence $T_\sigma \varphi \in \mathcal{S}(\mathbb{Z}^n).$
\end{proof}

\begin{lem} \label{swar}
	The space $\mathcal{S}(\mathbb{Z}^n)$ is dense in $H^{s, 2}(\mathbb{Z}^n).$
\end{lem}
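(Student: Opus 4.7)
The plan is to exploit the fact that the Bessel potential $J_s$ is the isometric isomorphism identifying $H^{s,2}(\mathbb{Z}^n)$ with $\ell^2(\mathbb{Z}^n)$, and push the density of $\mathcal{S}(\mathbb{Z}^n)$ in $\ell^2(\mathbb{Z}^n)$ across this identification.

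First I would fix $u \in H^{s,2}(\mathbb{Z}^n)$ and consider $v := J_{-s}u \in \ell^2(\mathbb{Z}^n)$. Next, since every finitely supported function on $\mathbb{Z}^n$ automatically belongs to $\mathcal{S}(\mathbb{Z}^n)$ (the defining supremum is trivially finite), and such functions are dense in $\ell^2(\mathbb{Z}^n)$, there is a sequence $\{\psi_j\} \subset \mathcal{S}(\mathbb{Z}^n)$ with $\psi_j \to v$ in $\ell^2(\mathbb{Z}^n)$.

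I then define $\varphi_j := J_s \psi_j$. The key point is that the symbol $(1+|k|^2)^{s/2}$ of $J_s$ lies in $S^s(\mathbb{Z}^n \times \mathbb{T}^n)$, so by Proposition \ref{Dense} the operator $J_s$ maps $\mathcal{S}(\mathbb{Z}^n)$ into itself, and hence $\varphi_j \in \mathcal{S}(\mathbb{Z}^n)$. Since the symbol of $J_s$ depends only on $k$, $J_s$ is just pointwise multiplication by $(1+|k|^2)^{s/2}$, so $J_{-s} J_s = I$ at the level of sequences (or tempered distributions). Therefore
\[
\|\varphi_j - u\|_{s,2} = \|J_{-s}\varphi_j - J_{-s}u\|_{\ell^2(\mathbb{Z}^n)} = \|\psi_j - v\|_{\ell^2(\mathbb{Z}^n)} \to 0
\]
as $j \to \infty$, which shows that $\mathcal{S}(\mathbb{Z}^n)$ is dense in $H^{s,2}(\mathbb{Z}^n)$.

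The only non-trivial ingredient here is that $J_s$ preserves $\mathcal{S}(\mathbb{Z}^n)$, which is where we invoke Proposition \ref{Dense}; everything else reduces to the isometry property of $J_{-s}:H^{s,2}(\mathbb{Z}^n)\to\ell^2(\mathbb{Z}^n)$ and the elementary density of compactly supported sequences in $\ell^2(\mathbb{Z}^n)$.
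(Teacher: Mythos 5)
Your proof is correct and follows essentially the same route as the paper's: pull $u$ back to $\ell^2(\mathbb{Z}^n)$ via $J_{-s}$, approximate there by Schwartz functions, and push forward with $J_s$, using Proposition \ref{Dense} to keep the approximants in $\mathcal{S}(\mathbb{Z}^n)$. Your extra remarks --- that finitely supported sequences already lie in $\mathcal{S}(\mathbb{Z}^n)$ and that $J_{-s}J_s=I$ because the symbol depends only on $k$ --- just make explicit two small points the paper leaves implicit.
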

\begin{proof}
	Let $u \in H^{s, 2}(\mathbb{Z}^n).$ By the definition of 
$H^{s, 2}(\mathbb{Z}^n),$ $J_{-s} u \in \ell^2(\Zn).$ Since 
$\mathcal{S}(\mathbb{Z}^n)$ is dense in $\ell^2(\Zn),$ it follows 
that there exists a sequence $\{\varphi_k\}$ in 
$\mathcal{S}(\mathbb{Z}^n)$ 
such that $\varphi_k \rightarrow J_{-s}u$ in $\ell^2(\Zn)$ as $k 
\rightarrow \infty.$ Let $\psi_k= J_s \varphi_k,\, k=1,2,\ldots.$ By 
Proposition  \ref{Dense}, $\psi_k \in \mathcal{S}(\mathbb{Z}^n)$ 
because $J_s$ is a pseudo-differential operator. By the definition of 
$H^{s, 2}(\mathbb{Z}^n)$, we have 
	$$\|\psi_k- u\|_{s,2} = \|J_{-s} \psi_k- J_{-s}u\|_2 = 
\|\varphi_k-J_{-s}u\|_2 \rightarrow 0$$ as $k \rightarrow \infty.$ This 
shows 
that $\psi_k \rightarrow u$ in $H^{s,2}(\Z^n)$ as $k\to \infty,$ and hence 
$\mathcal{S}(\mathbb{Z}^n)$ is dense in $H^{s, 2}(\mathbb{Z}^n).$ 
\end{proof}

The following theorem is a discrete version of Agmon--Douglis--Nirenberg 
estimate for pseudo-differential operators on $\mathbb{Z}^n.$
\begin{prop} \label{vis14}
	Let $m>0$ and let $\sigma$ be  an elliptic symbol in 
$S^m(\Z^n\times \mathbb{T}^n).$ 
Then there exist positive constants $C_1$ and $C_2$ such that 
	$$ C_1 \|u\|_{m, 2} \leq ( \|T_\sigma u\|_2+\|u\|_2) \leq 
C_2 \|u\|_{m,2},\quad u \in H^{m,2}(\Z^n).$$
\end{prop}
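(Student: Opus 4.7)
The estimate has two directions, and the easy direction is the upper bound. The plan for that half is to invoke Theorem \ref{vis4.3} with $s=m$: since $\sigma\in S^m(\Z^n\times\mathbb{T}^n)$, the operator $T_\sigma\colon H^{m,2}(\Z^n)\to H^{0,2}(\Z^n)=\ell^2(\Z^n)$ is bounded, so $\|T_\sigma u\|_2\leq C\|u\|_{m,2}$. Combining this with Theorem \ref{vis4.4} (with $s=0<m=t$), which gives $\|u\|_2\leq C'\|u\|_{m,2}$, immediately produces the right-hand inequality with $C_2=C+C'$.

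For the lower bound, the plan is to feed ellipticity through the parametrix of Theorem \ref{Ruz3.6}. Applied to the elliptic symbol $\sigma\in S^m$, that theorem produces $\tau\in S^{-m}(\Z^n\times\mathbb{T}^n)$ and $S$ with symbol in $\bigcap_{\nu\in\R} S^\nu$ such that $T_\tau T_\sigma=I+S$. Rewriting and taking the $H^{m,2}$ norm:
\begin{equation*}
\|u\|_{m,2}=\|T_\tau T_\sigma u-S u\|_{m,2}\leq \|T_\tau T_\sigma u\|_{m,2}+\|Su\|_{m,2}.
\end{equation*}
Because $\tau\in S^{-m}$, Theorem \ref{vis4.3} gives that $T_\tau\colon \ell^2(\Z^n)\to H^{m,2}(\Z^n)$ is bounded, hence $\|T_\tau T_\sigma u\|_{m,2}\leq C\|T_\sigma u\|_2$. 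For the remainder, the fact that $S$ has symbol in $S^{-m}$ (and much better) means, again by Theorem \ref{vis4.3}, that $S\colon \ell^2(\Z^n)\to H^{m,2}(\Z^n)$ is bounded, giving $\|Su\|_{m,2}\leq C\|u\|_2$. Combining these and absorbing constants, we get $\|u\|_{m,2}\leq C(\|T_\sigma u\|_2+\|u\|_2)$, which is the desired left-hand inequality with $C_1=1/C$.

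The main subtlety — more bookkeeping than obstacle — is that the inequality is stated for $u\in H^{m,2}(\Z^n)$, so one must justify that the identities $T_\tau T_\sigma u=(I+S)u$ and the norm estimates are valid on the full Sobolev space rather than only on $\S(\Z^n)$. This is handled by first establishing everything for $\varphi\in\S(\Z^n)$ (where all operators act pointwise without ambiguity, and $T_\sigma\varphi\in\S(\Z^n)$ by Proposition \ref{Dense}) and then extending by density using Lemma \ref{swar}, which asserts that $\S(\Z^n)$ is dense in $H^{m,2}(\Z^n)$, together with the continuity of the bounded operators involved. Once this approximation step is carried out, both inequalities pass to the limit and yield the stated Agmon--Douglis--Nirenberg estimate.
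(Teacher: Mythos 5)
Your proposal is correct and follows essentially the same route as the paper: the upper bound from Theorems \ref{vis4.3} and \ref{vis4.4}, and the lower bound by writing $u=T_\tau T_\sigma u - Su$ via the parametrix of Theorem \ref{Ruz3.6} and applying the Sobolev boundedness of $T_\tau$ and $S$ as maps from $\ell^2(\Zn)$ into $H^{m,2}(\Zn)$. Your closing remark about first working on $\S(\Zn)$ and extending by density via Lemma \ref{swar} is a harmless extra precaution that the paper leaves implicit.
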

\begin{proof}
	By Theorem \ref{vis4.3} and Theorem \ref{vis4.4}, there exists a positive constant $C_2$ such that 
	\begin{equation}
	(\|T_\sigma u\|_{2}+ \|u\|_2) \leq C_2 \|u\|_{m, 2},\quad  u \in 
H^{m,2}(\mathbb{Z}^n).
	\end{equation}
	Now, by Theorem \ref{Ruz3.6} we have 
	   \begin{equation}
	   u= T_\tau T_\sigma u - Ru,\quad  u \in H^{m,2}(\mathbb{Z}^n),
	   \end{equation} where $\tau \in S^{-m}(\Z^n\times \mathbb{T}^n)$ 
and $R$ is 
a pseudo-differential operator with symbol in $\cap_{\nu \in \R} 
S^\nu(\Z^n\times \mathbb{T}^n).$ 
Hence it follows from Theorem \ref{vis4.3} that there exists a positive constant $C_1$ such that 
	   $$C_1 \|u\|_{m,2 } \leq (\|T_\sigma u\|_2+ \|u\|_2).$$ This complete the proof.
\end{proof}

\begin{prop} \label{vis15}
	Let $m>0$ and let $\sigma$ be an elliptic symbol in 
$S^m(\Z^n\times \mathbb{T}^n).$ 
Then $\mathcal{D}(T_{\sigma,0})= H^{m,2}(\mathbb{Z}^n).$  
\end{prop}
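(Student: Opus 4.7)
The plan is to prove the equality by showing the two inclusions separately, using the Agmon--Douglis--Nirenberg estimate (Proposition \ref{vis14}) together with the density of $\mathcal{S}(\mathbb{Z}^n)$ in $H^{m,2}(\mathbb{Z}^n)$ from Lemma \ref{swar}.

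For the inclusion $H^{m,2}(\mathbb{Z}^n)\subseteq \mathcal{D}(T_{\sigma,0})$, I would take $u\in H^{m,2}(\mathbb{Z}^n)$ and invoke Lemma \ref{swar} to pick $\{\varphi_j\}\subset \mathcal{S}(\mathbb{Z}^n)$ with $\varphi_j\to u$ in $H^{m,2}(\mathbb{Z}^n)$. By Theorem \ref{vis4.4} (with $s=0$, $t=m$), $\varphi_j\to u$ in $\ell^2(\mathbb{Z}^n)$. By Theorem \ref{vis4.3}, $T_\sigma\colon H^{m,2}(\mathbb{Z}^n)\to \ell^2(\mathbb{Z}^n)$ is bounded, so $T_\sigma\varphi_j\to T_\sigma u$ in $\ell^2(\mathbb{Z}^n)$. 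By the definition of the minimal operator, this puts $u$ in $\mathcal{D}(T_{\sigma,0})$.

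For the reverse inclusion $\mathcal{D}(T_{\sigma,0})\subseteq H^{m,2}(\mathbb{Z}^n)$, take $u\in \mathcal{D}(T_{\sigma,0})$ with $T_{\sigma,0}u=f$. By definition, there is $\{\varphi_j\}\subset \mathcal{S}(\mathbb{Z}^n)$ with $\varphi_j\to u$ in $\ell^2(\mathbb{Z}^n)$ and $T_\sigma\varphi_j\to f$ in $\ell^2(\mathbb{Z}^n)$. This is the step where the ADN inequality is essential: applying Proposition \ref{vis14} to the differences $\varphi_j-\varphi_k\in \mathcal{S}(\mathbb{Z}^n)\subset H^{m,2}(\mathbb{Z}^n)$ gives
\[
C_1\|\varphi_j-\varphi_k\|_{m,2}\leq \|T_\sigma(\varphi_j-\varphi_k)\|_{2}+\|\varphi_j-\varphi_k\|_{2}.
\]
The right-hand side tends to $0$ as $j,k\to\infty$ since both sequences are Cauchy in $\ell^2(\mathbb{Z}^n)$. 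Hence $\{\varphi_j\}$ is Cauchy in $H^{m,2}(\mathbb{Z}^n)$, and by completeness converges to some $v\in H^{m,2}(\mathbb{Z}^n)$. By Theorem \ref{vis4.4} again, $\varphi_j\to v$ in $\ell^2(\mathbb{Z}^n)$, which forces $v=u$. Therefore $u\in H^{m,2}(\mathbb{Z}^n)$.

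The substantive step is the second inclusion, and the main obstacle is precisely the a priori control of the $H^{m,2}$-norm from the $\ell^2$-norms of $u$ and $T_\sigma u$; this is exactly what the elliptic estimate in Proposition \ref{vis14} delivers, so once it is available the argument reduces to a completeness/consistency argument.
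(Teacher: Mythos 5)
Your proposal is correct and follows essentially the same route as the paper: density of $\mathcal{S}(\mathbb{Z}^n)$ in $H^{m,2}(\mathbb{Z}^n)$ plus boundedness of $T_\sigma$ for the inclusion $H^{m,2}(\mathbb{Z}^n)\subseteq\mathcal{D}(T_{\sigma,0})$, and the Agmon--Douglis--Nirenberg estimate applied to differences $\varphi_j-\varphi_k$ together with completeness of $H^{m,2}(\mathbb{Z}^n)$ for the reverse inclusion. Your write-up is in fact slightly more explicit than the paper's in spelling out the Cauchy argument via the elliptic estimate and the identification $v=u$ through the embedding of Theorem \ref{vis4.4}.
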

\begin{proof} 
	Let $u \in H^{m,2}(\mathbb{Z}^n).$ Then by Lemma \ref{swar}, we can 
find a sequence  $\{\varphi_k\}$ in $\mathcal{S}(\mathbb{Z}^n)$ such that 
$\varphi_k \rightarrow u$ in $H^{m, 2}(\mathbb{Z}^n)$ as $k \rightarrow 
\infty.$ By Lemma \ref{swar} and Proposition \ref{vis14},  $\{T_\sigma 
\varphi_k\}$ and $\{\varphi_k\}$ are Cauchy sequences in 
$\ell^2(\Zn)$ and hence $\varphi_k \rightarrow u$ and $T_\sigma 
\varphi_k 
\rightarrow f$ in $\ell^2(\mathbb{Z}^n)$ for some $u$ and $f$ in 
$\ell^2(\mathbb{Z}^n)$ as $k\to \infty$.  Hence by the definition of 
$T_{\sigma, 
0},$ $u \in \mathcal{D}(T_{\sigma, 0})$ and $T_{\sigma, 0}u =f.$
On the other hand, let  $u \in \mathcal{D}(T_{\sigma, 0}).$ Then by 
the definition of $T_{\sigma, 0}$ again, we can find a 
sequence $\{\varphi_k\}$ in 
$\mathcal{S}(\mathbb{Z}^n)$ such that $\varphi_k \rightarrow u$ and 
$T_\sigma \varphi_k \rightarrow f$ in $\ell^2(\Zn)$ for some $f 
\in 
\ell^2(\Zn)$ as $k\to \infty.$ Therefore $\{\varphi_k\}$ and 
$\{T_\sigma \varphi_k\}$ are 
Cauchy sequences in $L^2(\mathbb{Z}^2).$ So, by Proposition \ref{vis14} 
and Lemma \ref{swar}, $\{\varphi_k\}$ is a Cauchy sequence in $H^{m, 
2}(\mathbb{Z}^n).$ Since $H^{m, 2}(\mathbb{Z}^n)$ is complete, it follows 
that $\varphi_k \rightarrow v$ in $H^{m,2}(\mathbb{Z}^n)$ for some $v \in 
H^{m,2}(\mathbb{Z}^n)$ as $k \rightarrow \infty.$ Then 
by Theorem \ref{vis4.4}, $\phi_k \rightarrow v$ in $\ell^2(\Zn)$ as 
$k \rightarrow \infty.$ Hence $u=v$ and consequently, $u 
\in H^{m,2}(\mathbb{Z}^n).$
\end{proof}

The following theorem is the main theorem of this section.

\begin{thm}
	Let $m>0$ and let $\sigma$ be an elliptic symbol in 
$S^m(\Z^n\times \mathbb{T}^n).$ Then $T_{\sigma, 0}=T_{\sigma, 1}.$
\end{thm}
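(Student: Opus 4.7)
The plan is to combine Proposition \ref{visext}, which gives $T_{\sigma,0} \subseteq T_{\sigma,1}$, with Proposition \ref{vis15}, which identifies $\mathcal{D}(T_{\sigma,0})$ with $H^{m,2}(\Z^n)$, and then use the parametrix from Theorem \ref{Ruz3.6} to show the nontrivial inclusion $\mathcal{D}(T_{\sigma,1}) \subseteq H^{m,2}(\Z^n)$.

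Fix $u \in \mathcal{D}(T_{\sigma,1})$ and set $f = T_{\sigma,1}u \in \ell^2(\Z^n)$. By Proposition \ref{vis4.7}, we have $T_\sigma u = f$ in the sense of tempered distributions on $\Z^n$. Since $\sigma$ is elliptic of order $m$, Theorem \ref{Ruz3.6} produces a symbol $\tau \in S^{-m}(\Z^n \times \mathbb{T}^n)$ and an infinitely smoothing operator $S$ (with symbol in $\bigcap_{\nu \in \R} S^\nu(\Z^n \times \mathbb{T}^n)$) such that $T_\tau T_\sigma = I + S$. Applying $T_\tau$ to the distributional identity $T_\sigma u = f$ yields
\begin{equation*}
u + Su \;=\; T_\tau T_\sigma u \;=\; T_\tau f,
\end{equation*}
so that $u = T_\tau f - S u$.

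Now I would use Theorem \ref{vis4.3} twice. Since $\tau \in S^{-m}$ and $f \in \ell^2(\Z^n) = H^{0,2}(\Z^n)$, the operator $T_\tau : H^{0,2}(\Z^n) \to H^{m,2}(\Z^n)$ is bounded, hence $T_\tau f \in H^{m,2}(\Z^n)$. Similarly, the symbol of $S$ lies in $S^{-m}(\Z^n \times \mathbb{T}^n)$ in particular, so $S: H^{0,2}(\Z^n) \to H^{m,2}(\Z^n)$ is bounded and $Su \in H^{m,2}(\Z^n)$. Therefore
\begin{equation*}
u \;=\; T_\tau f - Su \;\in\; H^{m,2}(\Z^n) \;=\; \mathcal{D}(T_{\sigma,0}),
\end{equation*}
where the last equality is Proposition \ref{vis15}. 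Combined with Proposition \ref{visext}, this proves $\mathcal{D}(T_{\sigma,0}) = \mathcal{D}(T_{\sigma,1})$, and on this common domain the two operators agree because $T_{\sigma,1}$ extends $T_{\sigma,0}$. Hence $T_{\sigma,0} = T_{\sigma,1}$.

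The main subtlety I would be careful about is the passage from the operator identity $T_\tau T_\sigma = I + S$, initially valid on $\mathcal{S}(\Z^n)$, to the distributional equation $T_\tau(T_\sigma u) = u + Su$ for $u \in \ell^2(\Z^n)$; this relies on the fact that $T_\tau$ and $S$ act continuously on the tempered distributions on $\Z^n$ and restrict to bounded maps between the relevant Sobolev spaces, which is what Theorem \ref{vis4.3} guarantees. Everything else is an assembly of results already established in the paper.
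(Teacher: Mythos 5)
Your proof is correct and follows essentially the same route as the paper: reduce to showing $\mathcal{D}(T_{\sigma,1}) \subseteq H^{m,2}(\Z^n)$ via Propositions \ref{visext} and \ref{vis15}, then apply the parametrix of Theorem \ref{Ruz3.6} together with the Sobolev boundedness of Theorem \ref{vis4.3}. If anything, your write-up is the cleaner one: you consistently use the left parametrix identity $T_\tau T_\sigma = I+S$, which is what makes the mapping orders work out, whereas the paper's text writes $u = T_\sigma T_\tau u - Ru$ and then argues as though the factors were composed in the other order.
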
 
\begin{proof}
Since $T_{\sigma, 0}$ is the smallest closed extension	of $T_\sigma,$ 
it follows from Proposition \ref{vis15} that it is sufficient to prove that 
$\mathcal{D}(T_{\sigma, 0}) \subseteq H^{m,2}(\mathbb{Z}^n).$ Let $u \in 
\mathcal{D}(T_{\sigma, 1}).$ Then Theorem \ref{Ruz3.6} gives   
\begin{equation} \label{lastvis}
u= T_\sigma T_\tau u -Ru,
\end{equation} where $\tau \in S^{-m}(\Z^n\times \mathbb{T}^n)$ and $R$ is 
a 
pseudo-differential 
operator with symbol in $\cap_{k \in \R} S^k(\Z^n\times \mathbb{T}^n.$ By 
Proposition 
\ref{vis4.7}, $T_{\sigma, 1}u= T_\sigma u$ in the distribution sense. 
Thus, by the  definition of $T_{\sigma, 1}$, $ u \in 
\ell^2(\Zn).$ Since 
$\tau \in S^{-m}(\Z^n\times \mathbb{T}^n),$ it follows  from Theorem 
\ref{compo} and Theorem 
\ref{vis4.3} that $T_\sigma T_\tau u \in H^{m, 2}(\mathbb{Z}^n).$ Since 
$u \in \ell^2(\mathbb{Z}^n)$ and $R$ is a pseudo-differential operator with 
symbol in $S^{-m}(\Z^n\times \mathbb{T}^n)$, it follows from Theorem 
\ref{vis4.3} again that 
$Ru \in H^{m, 2}(\mathbb{Z}^n).$ Hence $u \in H^{m, 2}(\mathbb{Z}^n).$
	
\end{proof}

\section{ Fredholmness and ellipticity}\label{Fredell}
In this section we study the Fredholmness and ellipticity of a pseudo-differential operator. We prove that the a pseudo-differential operator of order 0 is elliptic if and only if Fredholm. We also calculate the index of such a pseudo-differential operator.

Let us first recall that a closed linear operator $A$ from a complex 
Banach space $X$ into a complex Banach space $Y$ with dense domain $\mathcal{D}(A)$ is said to be Fredholm if the range $R(A)$ of $A$ is a closed subspace of $Y$, the null space $N(A)$ of $A$ and the null space $N(A^t)$ of the true adjoint $A^t$ of $A$ are finite dimensional. For a Fredholm operator $A$, the index $i(A)$ of $A$ is defined by
$$i(A)=\dim N(A)-\dim N(A^t).$$

The following criterion for a closed linear operator to be Fredholm is 
usually attributed to Atkinson \cite{At}.
\begin{thm}
Let $A$ be a closed linear operator from a complex Banach space $X$ into a 
complex Banach space $Y$ with dense domain $\mathcal{D}(A).$ Then $A$ is 
Fredholm if and only if we can find a bounded linear operator 
$B:Y\rightarrow X,$ a compact operator $K_{1}:X\rightarrow X$ and a 
compact operator $K_2:Y\rightarrow Y$ such that $BA=I+K_1$ on $\mathcal{D}(A)$ and $AB=I+K_2$ on $Y$.
\end{thm}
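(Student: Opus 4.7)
The plan is to prove the two implications separately. For the easier direction, suppose such $B$, $K_1$, $K_2$ exist. If $x \in N(A)$, then $(I + K_1)x = BAx = 0$, so $N(A) \subseteq N(I + K_1)$; by the classical Riesz--Schauder theory for compact perturbations of the identity, $N(I + K_1)$ is finite dimensional, and hence so is $N(A)$. Analogously, since $AB = I + K_2$ is bounded on $Y$, taking true adjoints gives $B^t A^t = I + K_2^t$ on $\mathcal{D}(A^t)$, so $N(A^t) \subseteq N(I + K_2^t)$ is finite dimensional. For the range, $R(A) \supseteq R(AB) = R(I + K_2)$, and $R(I + K_2)$ is closed with finite codimension in $Y$; I would then invoke the algebraic fact that any subspace of $Y$ containing a closed finite-codimension subspace is automatically closed (its image in the finite-dimensional quotient space is trivially closed), and conclude that $R(A)$ is closed. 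Hence $A$ is Fredholm.

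For the converse, assume $A$ is Fredholm. Since $N(A)$ is finite dimensional, it admits a closed topological complement, so $X = N(A) \oplus X_1$ with a bounded projection $P: X \to N(A)$. Since $R(A)$ is closed and has finite codimension in $Y$, write $Y = R(A) \oplus Y_1$ with $\dim Y_1 < \infty$ and a bounded projection $Q: Y \to R(A)$ along $Y_1$. The restriction $A_1 := A|_{X_1 \cap \mathcal{D}(A)} : X_1 \cap \mathcal{D}(A) \to R(A)$ is an algebraic bijection. To promote its algebraic inverse to a bounded operator, equip $\mathcal{D}(A)$ with the graph norm $\|x\|_A := \|x\| + \|Ax\|$, which makes it a Banach space because $A$ is closed; then $X_1 \cap \mathcal{D}(A)$ is a closed subspace, and $A_1$ is bounded between Banach spaces. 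The open mapping theorem now yields a bounded inverse $B_0: R(A) \to X_1 \cap \mathcal{D}(A)$, which is also bounded as a map into $X$ since $\|x\| \leq \|x\|_A$. Define $B := B_0 \circ Q : Y \to X$. A direct computation shows $BAx = x - Px$ for $x \in \mathcal{D}(A)$ and $ABy = Qy = y - (I - Q)y$ for $y \in Y$, so the choices $K_1 := -P$ and $K_2 := -(I - Q)$ have finite rank and are therefore compact.

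The chief obstacle lies in the converse direction: because $A$ is only assumed closed (not bounded from $X$), the open mapping theorem cannot be applied directly to $A_1$ as a map between subspaces of $X$ and $Y$. The graph-norm detour is essential and is precisely the step where closedness of $A$ is exploited. A subtler point in the forward direction is that closedness of $R(A)$ does not follow from finite codimension alone, but must be deduced from the inclusion $R(I + K_2) \subseteq R(A)$ together with the Riesz--Schauder closed-range property of the compact perturbation $I + K_2$.
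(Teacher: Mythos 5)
Your proof is correct, but note that the paper itself offers no proof of this statement: it is quoted as a classical result and attributed to Atkinson with a citation, so there is nothing in the text to compare against line by line. What you have written is a complete and correct rendition of the standard argument, properly adapted to the setting the paper actually needs, namely a \emph{closed, densely defined} (possibly unbounded) operator rather than a bounded one. The two points you flag as delicate are indeed the right ones: in the converse direction the passage to the graph norm on $\mathcal{D}(A)$ is exactly what legitimizes the open mapping theorem for $A_1$, and in the forward direction the closedness of $R(A)$ must come from the inclusion $R(I+K_2)\subseteq R(A)$ together with the Riesz--Schauder closed-range property, via the observation that a subspace containing a closed finite-codimensional subspace is the preimage of a subspace of the finite-dimensional quotient and hence closed. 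One small point worth stating more carefully: for the adjoint step you only have the inclusion $B^t A^t \subseteq (AB)^t = I + K_2^t$ (equality of the two compositions need not hold as operators with the same domain a priori), but the inclusion restricted to $\mathcal{D}(A^t)$ is precisely what yields $N(A^t)\subseteq N(I+K_2^t)$, so your conclusion stands. Your choices $K_1=-P$ and $K_2=-(I-Q)$ are finite rank, which is slightly stronger than the compactness the statement asks for; this is the usual situation and is harmless.
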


The main result  in this section is the following theorem.
\begin{thm}\label{Freeqelli}
	Let $\sigma \in S^0(\mathbb{Z}^n \times \mathbb{T}^n)$. 
Then $T_\sigma: \ell^2(\Zn) \rightarrow \ell^2(\Zn)$ is 
Fredholm if and only if $T_\sigma:\ell^2(\Zn)\to \ell^2(Z^n)$ is elliptic. 
\end{thm}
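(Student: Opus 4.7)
The plan is to prove both directions using Atkinson's criterion as the common framework. For the forward direction (elliptic $\Rightarrow$ Fredholm), I would invoke the parametrix of Theorem \ref{Ruz3.6}: ellipticity of $\sigma \in S^0$ supplies a symbol $\tau \in S^0(\mathbb{Z}^n\times \mathbb{T}^n)$ and ``infinitely smoothing'' remainders $R,S$ with symbols in $\bigcap_\nu S^\nu$ such that $T_\sigma T_\tau = I+R$ and $T_\tau T_\sigma = I+S$ on $\ell^2(\mathbb{Z}^n)$. Theorem \ref{l^2} gives $T_\tau$ bounded on $\ell^2$, while $R$ and $S$ are compact on $\ell^2$: their symbols lie in $S^{-\epsilon}$ for every $\epsilon > 0$, so Theorem \ref{vis4.3} bounds them $\ell^2 = H^{0,2}\to H^{\epsilon,2}$, and Theorem \ref{compact} supplies the compact inclusion $H^{\epsilon,2}\hookrightarrow \ell^2$. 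Atkinson's theorem with $B = T_\tau$, $K_1 = S$, $K_2 = R$ concludes this direction.

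For the reverse direction (Fredholm $\Rightarrow$ elliptic), I would argue by contradiction via a singular-sequence construction. Suppose $T_\sigma$ is Fredholm but $\sigma$ is not elliptic; then there exist $k_j \in \mathbb{Z}^n$ with $|k_j|\to\infty$ and $x_j \in \mathbb{T}^n$ with $|\sigma(k_j, x_j)|\to 0$. Via Theorem \ref{ruz}, I transfer to the toroidal operator $T_\tau$ on $L^2(\mathbb{T}^n)$ with $\tau(x,k) = \overline{\sigma(-k,x)} \in S^0(\mathbb{T}^n\times\mathbb{Z}^n)$: conjugation by the isometry $\mathcal{F}_{\mathbb{Z}^n}$ and passage to the adjoint preserve Fredholmness, and $\tau$ is non-elliptic at $(x_j, -k_j)$. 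After relabelling, assume $|k_j|\to\infty$ and $|\tau(x_j, k_j)|\to 0$. Fix a cutoff $\chi\in C_c^\infty(\mathbb{R}^n)$ with $\|\chi\|_{L^2} = 1$, set $a_j = |k_j|^{-1/2}\to 0$, and define
\[
\chi_j(x) = a_j^{-n/2}\chi\!\left(\tfrac{x-x_j}{a_j}\right), \qquad f_j(x) = \chi_j(x)\,e^{2\pi i k_j\cdot x},
\]
viewed as functions on $\mathbb{T}^n$ for $j$ large. Then $\|f_j\|_{L^2(\mathbb{T}^n)} = 1$ and $f_j\to 0$ weakly (Riemann--Lebesgue on $\overline{g}\chi_j$ after density approximation by smooth $g$).

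A direct computation using $\widehat{f_j}(m) = \widehat{\chi_j}(m-k_j)$ yields
\[
(T_\tau f_j)(x) = e^{2\pi i k_j\cdot x}\bigl[\tau(x,k_j)\chi_j(x) + E_j(x)\bigr],
\]
where $E_j(x) = \sum_\ell e^{2\pi i \ell\cdot x}[\tau(x,k_j+\ell)-\tau(x,k_j)]\,\widehat{\chi_j}(\ell)$. The principal term has $L^2$-norm bounded by $\sup_{|x-x_j|\leq Ca_j}|\tau(x,k_j)|$, which tends to $0$ because $\tau(\cdot,k_j)$ is uniformly Lipschitz in $x$ (from the $|\beta|=1$ bound in $S^0$) and $|\tau(x_j,k_j)|\to 0$. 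For $E_j$, the bound $|\Delta_k^\alpha\tau|\leq C_\alpha(1+|k|)^{-|\alpha|}$ gives $|\tau(x,k_j+\ell)-\tau(x,k_j)|\leq C|\ell|(1+|k_j|)^{-1}$ for $|\ell|\leq|k_j|/2$, while the tail $|\ell|>|k_j|/2$ is tamed by the rapid decay $|\widehat{\chi_j}(\ell)|=a_j^{n/2}|\widehat{\chi}(a_j\ell)|$; with $a_j = |k_j|^{-1/2}$ this sums to $\|E_j\|_{L^2}\to 0$. Hence $\|T_\tau f_j\|_{L^2}\to 0$. Atkinson supplies $B$ bounded and $K$ compact on $L^2(\mathbb{T}^n)$ with $BT_\tau = I+K$; applying to $f_j$, $f_j + Kf_j = BT_\tau f_j\to 0$, and since $Kf_j\to 0$ in norm (compact operator on a weakly null sequence), $\|f_j\|\to 0$, contradicting $\|f_j\|=1$.

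The main obstacle is the quantitative estimate $\|T_\tau f_j\|_{L^2}\to 0$: the scale $a_j$ must be simultaneously small enough that spatial localization forces the principal term $\tau(\cdot,k_j)\chi_j$ small in $L^2$, yet large enough that the Fourier spread of $\widehat{\chi_j}$ keeps the non-commutativity error $E_j$ under control. Because $\mathbb{T}^n$ is compact and $\mathbb{Z}^n$ is discrete, the usual $\mathbb{R}^n$ microlocal coherent-state construction cannot be used verbatim; the Taylor-type expansion of $\tau$ in its frequency slot must instead be executed with the discrete difference operators $\Delta_k^\alpha$, whose decay is precisely what encodes the $S^0$ condition.
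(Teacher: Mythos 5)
Your forward direction (elliptic $\Rightarrow$ Fredholm) is exactly the paper's argument: parametrix from Theorem \ref{Ruz3.6}, compactness of the smoothing remainders on $\ell^2(\Zn)$ by factoring through a Sobolev space and invoking Theorem \ref{compact}, then Atkinson. For the converse the paper does something much shorter: it transfers to the toroidal operator $T_\tau$ via Theorem \ref{ruz} and simply cites \cite[Theorem 3.9]{Mol10} for ``Fredholm $\Rightarrow$ elliptic'' on the torus. Your singular-sequence construction is a genuinely different, self-contained route, and it is arguably more honest, since the cited result is stated for $\mathbb{S}^1$ rather than $\mathbb{T}^n$; a correct coherent-state argument would actually supply the detail the paper leaves implicit.

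However, there is a quantitative gap in that argument as written: the scale $a_j=|k_j|^{-1/2}$ does not make $\|E_j\|_{L^2}\to 0$ for $n\geq 2$ if you estimate $E_j$ by the triangle inequality over $\ell$, which is what ``this sums to'' suggests. Concretely, the dominant contribution is
\[
\sum_{|\ell|\leq |k_j|/2} C\,|\ell|\,(1+|k_j|)^{-1}\,|\widehat{\chi_j}(\ell)|
\;\lesssim\; (1+|k_j|)^{-1}\,a_j^{n/2}\sum_{\ell} |\ell|\,(1+a_j|\ell|)^{-N}
\;\lesssim\; |k_j|^{-1}a_j^{-\frac{n}{2}-1},
\]
which with $a_j=|k_j|^{-1/2}$ is of order $|k_j|^{\frac{n}{4}-\frac{1}{2}}$: bounded but not small for $n=2$, and divergent for $n\geq 3$. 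The tension you yourself identify (localization in $x$ versus Fourier spread) is resolved incorrectly by this choice. The fix is easy: you only need $a_j\to 0$ together with $a_j^{\frac{n}{2}+1}|k_j|\to\infty$, so taking for instance $a_j=|k_j|^{-1/(n+2)}$ makes the principal term tend to $0$ (it is $O(|\tau(x_j,k_j)|+a_j)$ by the uniform Lipschitz bound) while the error term is $O(|k_j|^{-1/2})$ and the tail $|\ell|>|k_j|/2$ remains negligible by the rapid decay of $\widehat{\chi}$. Alternatively, a Parseval-type ($\ell^2$ in $\ell$) estimate on $E_j$ would rescue the scale $|k_j|^{-1/2}$, but that requires handling the $x$-dependence of $\tau(x,k_j+\ell)-\tau(x,k_j)$ and is not what you wrote. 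The remainder of the contradiction (weak nullity of $f_j$, $BT_\tau=I+K$ with $K$ compact forcing $\|f_j\|\to 0$) is correct.
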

\begin{proof}
	Suppose that $T_\sigma:\ell^2(\Zn)\to \ell^2(\Zn)$ is a Fredholm 
operator.  Then $T_\tau= 
\mathcal{F}_{\mathbb{Z}^n}  T_\sigma^t  \mathcal{F}^{-1}_{\Z^n}$ is also a 
Fredholm operator from $L^2(\mathbb{T}^n)$ into $L^2(\mathbb{T}^n)$ , 
where $$\tau(x, k)= \overline{\sigma(-k, x)},\quad x\in \mathbb{T}^n, 
k\in \Z^n.$$ It 
is 
easy to see that $\sigma \in S^0(\mathbb{Z}^n \times \mathbb{T}^n)$ 
implies that $\tau \in S^0(\mathbb{T}^n \times \mathbb{Z}^n).$ Therefore 
by \cite[Theorem 3.9]{ Mol10},  $\tau$ is elliptic which is 
equivalent to say that $\sigma$ is elliptic. Conversely, suppose that 
$\sigma$ is 
elliptic. Then therse exists a symbol $\tau\in S^{0}(\Z^n\times 
\mathbb{T}^n)$ such 
that
\begin{equation}
T_{\sigma}T_{\tau}=I+R
\end{equation}
and
\begin{equation}
T_\tau T_\sigma=I+S,
\end{equation}
where $R$ and $S$ are pseudo-differential operators with symbol in 
$\cap_{\nu\in 
\R}S^{\nu}(\Z^n\times \mathbb{T}^n).$ So, for any positive number $k$, the 
pseudo-differential  operator $R:  
\ell^{2}(\mathbb{Z}^n)\rightarrow \ell^{2}(\mathbb{Z}^n)$ is the same as the 
composition of the psudo-differential operator $R  : 
\ell^{2}(\mathbb{Z}^n)\rightarrow 
H^{t,2}(\mathbb{Z}^n)$ and the 
inclusion $i:H^{t,2}(\mathbb{Z}^n)\rightarrow  
\ell^{2}(\mathbb{Z}^n). $ Since $R  : \ell^{2}(\mathbb{Z}^n)\rightarrow 
H^{t,2}(\mathbb{Z}^n)$ is a bounded linear operator and from 
Theorem \ref{compact} 
$i:H^{t,2}(\mathbb{Z}^n)\rightarrow  \ell^{2}(\mathbb{Z}^n) $ is compact,  it 
follows that $R  : \ell^{2}(\mathbb{Z}^n)\rightarrow  \ell^{2}(\mathbb{Z}^n)$ is 
compact. Similarly $S  : \ell^{2}(\mathbb{Z}^n)\rightarrow  
\ell^{2}(\mathbb{Z}^n)$ is compact.  Hence by Atkinson's theorem,  
$T_{\sigma}$ is Fredholm.
\end{proof}
We end this section with an index formula for  Fredholm 
pseudo-differential operators on $\Z^n$.
First,  we prove the following lemma.
\begin{lemma}\label{InSc}
Let $\mathcal{S}(\Z^n\times \mathbb{T}^n)$ be the Schwartz space 
on $\Z^n\times \mathbb{T}^n$. Then
$$\bigcap\limits_{\nu\in \mathbb{R} 
}S^{\nu}(\mathbb{Z}^{n}\times\mathbb{T}^n)
=\mathcal{S}(\Z^n\times \mathbb{T}^n ).$$  
\end{lemma}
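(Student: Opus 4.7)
The plan is to unfold both sides to seminorm families and then observe that each family contains the other. The Schwartz space $\mathcal{S}(\Z^n\times\mathbb{T}^n)$ should be characterized as the set of functions $\sigma:\Z^n\times\mathbb{T}^n\to\C$ with $\sigma(k,\cdot)\in C^\infty(\mathbb{T}^n)$ for each $k$ and satisfying the rapid-decay seminorm bounds
$$\sup_{(k,x)\in\Z^n\times\mathbb{T}^n}(1+|k|)^{N}\,|(D_x^{(\beta)}\Delta_k^\alpha\sigma)(k,x)|<\infty$$
for all multi-indices $\alpha,\beta$ and all $N\in\N_0$. (Since $\mathbb{T}^n$ is compact, there is no decay condition in the $x$-variable.) Once this identification is in place, the equality of the two sets is essentially a tautology.

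For the inclusion $\bigcap_{\nu}S^\nu\subseteq \mathcal{S}$, I would fix $\sigma$ in the intersection and, given arbitrary $\alpha,\beta,N$, apply the $S^{-N}$ estimate to obtain
$$|(D_x^{(\beta)}\Delta_k^\alpha\sigma)(k,x)|\le C_{\alpha,\beta,N}(1+|k|)^{-N-|\alpha|}\le C_{\alpha,\beta,N}(1+|k|)^{-N},$$
which yields the required Schwartz seminorm bound after taking the supremum. For the reverse inclusion, starting from $\sigma\in\mathcal{S}(\Z^n\times\mathbb{T}^n)$ and any $\nu\in\R$, I would choose $N$ so large that $N\ge -\nu+|\alpha|$ (e.g.\ $N=\lceil|\nu|\rceil+|\alpha|$) and then use the Schwartz estimate with this $N$ to recover the $S^\nu$ bound
$$|(D_x^{(\beta)}\Delta_k^\alpha\sigma)(k,x)|\le C_{\alpha,\beta,N}(1+|k|)^{-N}\le C_{\alpha,\beta,N}(1+|k|)^{\nu-|\alpha|},$$
which shows $\sigma\in S^\nu$ for every $\nu$.

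There is no real obstacle here; the lemma amounts to translating the quantifier ``for every $\nu$'' into ``for every polynomial decay rate'', which is exactly what defines the Schwartz space in the discrete variable. The only care required is in pinning down the correct definition of $\mathcal{S}(\Z^n\times\mathbb{T}^n)$, namely that the decay/smoothness conditions involve the difference operators $\Delta_k^\alpha$ in $k$ and the derivatives $D_x^{(\beta)}$ in $x$, consistently with the conventions fixed in Section \ref{Pre}.
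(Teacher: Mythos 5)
Your proof is correct and follows essentially the same route as the paper's: both directions reduce to comparing the polynomial weights $(1+|k|)^{\nu-|\alpha|}$ against the rapid-decay seminorms defining $\mathcal{S}(\Z^n\times\mathbb{T}^n)$, which is exactly how the paper argues (it writes out only the inclusion $\bigcap_\nu S^\nu\subseteq\mathcal{S}$ and declares the converse easy, whereas you spell out both). Your characterization of the Schwartz seminorms via $(1+|k|)^N$ rather than monomials $k^\alpha x^\beta$ is an equivalent and harmless reformulation, since $\mathbb{T}^n$ is compact.
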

\begin{proof} It is easy to check that 
$\mathcal{S}(\Z^n\times \mathbb{T}^n)\subseteq 
\bigcap\limits_{\nu\in \mathbb{R} }S^{\nu}(\Z^n\times \mathbb{T}^n)$. 
Now,  let $\sigma\in \bigcap\limits_{\nu\in \mathbb{R} 
}S^{\nu}(\mathbb{Z}^{n}\times\mathbb{T}^n). $ 
Let $\alpha$, $\beta$, $\gamma$ and $\delta$ be multi-indices. Then
there exists a real 
number  $\nu_1$ such that
$$|\alpha|+\nu_1-|\gamma|\leq 0.$$
Since $\sigma\in S^{\nu_1},$ we can find a positive constant $C_{\nu_1, 
\gamma, \delta}$ such that
\begin{eqnarray}\sup_{(k,x)\in 
\Z^n\times 
\mathbb{T}^n}|k^{\alpha}x^{\beta}(D_{x}^{\delta}\Delta_{k}^{\gamma}\sigma)
(k,x)|\nonumber\\
&\leq& \sup_{(k,x)\in \Z^n\times \mathbb{T}^n} 
|k|^{|\alpha|}|(D_{x}^{\delta}\Delta_{k}^{\gamma}\sigma)(k,x)|\nonumber\\
&\leq& C_{\nu_1,\gamma,\delta}\sup_{k\in \mathbb{Z}^n, x\in \mathbb{T}^n 
}(1+|k|)^{\nu+|\alpha|-|\gamma|}<\infty
\end{eqnarray}
Therefor $\sigma\in \S(\Z^n\times \mathbb{T}^n).$
\end{proof}

Let $\sigma \in S^0(\mathbb{Z}^n \times \mathbb{T}^n)$ be elliptic. Let 
$\tau\in S^{0}(\Z^n\times \mathbb{T}^n)$ be such that
$$T_{\tau}T_{\sigma}=I-T_{1}$$ and $$T_{\sigma}T_{\tau}=I-T_{2},$$ where 
$T_1$ and $T_2$ are pseudo-differential operators with symbols $
\tau_1$ and
$\tau_2$ respectively in $\bigcap\limits_{\nu\in \mathbb{R} 
}S^{\nu}(\mathbb{Z}^{n}\times\mathbb{T}^n).$ Let $
f \in\mathcal{S}$. Then for $j=1,2,$
\begin{eqnarray}
(T_{j}f)(k)&=&\int_{\mathbb{T}^n}e^{2\pi ik\cdot x}\tau_j(k,x)\widehat{f}(x)
dx\nonumber
\end{eqnarray}
for all $k\in \mathbb{Z}^n.$  By Lemma \ref{InSc}, for $j=1,2$, $\tau_j\in 
\mathcal{S}(\Z^n\times \mathbb{T}^n)$.  Then by Theorem 5.3 in  
\cite{Rulat}, $T_1$ and $T_2$  are trace class operators and 
$$\text{tr}(T_j)=\sum\limits_{k\in\mathbb{Z}^n}\int_{\mathbb{T}^n}\tau_j(k,x)
dx.$$  
Then by Theorem 20.13 in \cite{Wongbook}, we get
$$i(T_{\sigma})=\text{tr}(T_1)-\text{tr}({T_2})=\sum\limits_{k\in\mathbb{Z}^n}\int_{\mathbb{T}^n}(\tau_1-\tau_2)(k,x)
dx.$$

\section{Acknowledgement}
We are grateful to Professor M. W. Wong for carefully reading the paper and the very useful suggestions.
Vishvesh Kumar is supported by Odysseus I Project (by FWO, Belgium) of 
Prof. Michael Ruzhansky. He thanks Prof. Michael Ruzhansky for his support 
and encouragement.


\begin{thebibliography}{aaa}
	
	\normalsize
	\baselineskip=17pt
	
\bibitem{At} F. V. Atkinson, The normal solubility of linear equations in normed spaces (Russian), Mat. Sbornik N. S. 28(70) , 3-14, 1951.
\bibitem{Rulat} L. N. A. Botchway, P. G. Kabiti and M. Ruzhansky, Difference equations and pseudo-differential operators on $\mathbb{Z}^n,$ 2017 (preprint) arXiv:1705.07564 
\bibitem{CK} D. Cardona and V. Kumar, $L^p$-boundedness and $L^p$-nuclearity of multilinear pseudo-differential operators on $\Zn$ and $\mathbb{T}^n,$ \emph{J. Fourier Anal. Appl.}, 2019. https://doi.org/10.1007/s00041-019-09689-7
\bibitem{Duvan} D. Cardona, Pseudo-Differential Operators on $Zn$  with Applications to Discrete Fractional Integral Operators, \emph{Bull. Iran. Math. Soc.}, 45(4) 1227-1241, 2019.
\bibitem{Cor95} H. O. Cordes. The technique of pseudodifferential operators,  \emph{ London Mathematical Society} Lecture Note Series. Cambridge University Press, Cambridge,volume 202, 1995.
\bibitem{AW2} A. Dasgupta and M.W. Wong, Ellipticity of Fredholm Pseudo-Differential Operators on $L^p(\Rn)$, \emph{New Developments in Pseudo-Differential Operators. Operator Theory: Advances and Applications}, vol 189. Birkh\"{a}user Basel.
\bibitem{ADWo} A. Dasgupta and M.W.Wong, Spectral Theory of SG Pseudo-Differential Operators on $L^p(\Rn)$, \emph{Studia Mathematica}, 187 (2),  185-197, 2008. 
\bibitem{DW13} J. Delgado and M. W. Wong. $L^p$-nuclear pseudo-differential operators on $\mathbb{Z}$ and $\mathbb{S}^{1}$ \emph{Proc. Amer. Math. Soc.,} 141(11): 3935–3942, 2013.

\bibitem{GJBNM16} M. B. Ghaemi, M. Jamalpour Birgani, and E. Nabizadeh Morsalfard. A study on
pseudo-differential operators on $\mathbb{S}^1$ and $\mathbb{Z}$. \emph{J. Pseudo-Differ. Oper. Appl.,} 7(2):237–
247, 2016.
\bibitem{Gru}  V. V. Grushin, Pseudo-differential operators on $\mathbb{R}^{n}$ with bounded symbols, Funct. Anal. 4, 202-212, 1970. 
\bibitem{Mol10} S. Molahajloo and M. W. Wong, Ellipticity, Fredholmness and spectral invariance of Pseudo-differential operators on $\mathbb{S}^1,$ \emph{J. Pseudo-Differ. Oper. Appl.} 1  183-205, 2010. 
\bibitem{SW} S. Molahajloo. Pseudo-differential operators on $\mathbb{Z}$. In Pseudo-differential operators: complex analysis and partial differential equations, volume 205 of Oper. Theory Adv.Appl., pages 213–221. Birkh\"{a}user Verlag, Basel, 2010.
\bibitem{Rab10} V. Rabinovich. Exponential estimates of solutions of pseudodifferential equations on the lattice $(h\mathbb{Z})^n$
applications to the lattice Schr\"{o}dinger and Dirac operators. \emph{J. Pseudo-Differ. Oper. Appl.,} 1(2):233–253, 2010.

\bibitem{RR09} V. S. Rabinovich and S. Roch. Essential spectra and exponential estimates of eigenfunctions of lattice operators of quantum mechanics. \emph{J. Phys. A,} 42(38):385207, 21, 2009.
\bibitem{RT11}   C. A. Rodriguez Torijano. $L^p$-estimates for pseudo-differential operators on $\mathbb{Z}^n$. \emph{J. Pseudo-Differ. Oper. Appl.}, 2(3):367–375, 2011.
\bibitem{R2} M. Ruzhansky and V. Turunen, On the Fourier analysis of operators on the torus, Modern trends in pseudo-differential operators, 87-105, Oper. Theory Adv. Appl., 172, Birkhauser, Basel, 2007. 
\bibitem{RT} M. Ruzhansky and V. Turunen, {\it Pseudo-differential oeprators and symmetries}, Birkh$\ddot{a}$user Verlag (2009).
\bibitem{R7} M. Ruzhansky and V. Turunen,, On the toroidal quantization of periodic pseudo-differential operators, Numerical Functional Analysis and Optimization, 30 (2009), 1098-1124.
\bibitem{R6} M. Ruzhansky, V. Turunen, Quantization of pseudo-differential operators on the torus, J. Fourier Anal. Appl., 16 (2010), 943-982.
\bibitem{R1}M. Ruzhansky,  V. Turunen, J. Wirth, Hörmander class of pseudo-differential operators on compact Lie groups and global hypoellipticity, J. Fourier Anal. Appl., 20 (2014), 476-499.
\bibitem{R3} M. Ruzhansky and V. Turunen, Sharp Garding inequality on compact Lie groups, J. Funct. Anal., 260 (2011), 2881-2901.
\bibitem{R4} M. Ruzhansky and J. Wirth, Global functional calculus for operators on compact Lie groups, J. Funct. Anal., 267 (2014), 144-172.


\bibitem{Wo17} M. W. Wong, Fredholm pseudo-differential operators on weighted Sobolev spaces, Ark. Math 21 , 271-282, 1983.
\bibitem{Wo18} M.W.Wong, Spectral theory of pseudo-differential operators. \emph{ Adv. in Appl. Math.} 15 no. 4, 437–451, 1994. 
\bibitem{Wo} M. W. Wong, M-elliptic pseudo-differential operators on $L^{p}(\mathbb{R}^n).$ \emph{Math. Nachr.},  279: 319-326 (2006). 
\bibitem{Wongbook} M. W. Wong,  \emph{An introduction to Pseudo-differential Operators}, World Scientific. 3rd Edition, 2014.  	
	
	
\end{thebibliography}
\end{document}